\title{Regularity of infinitesimal CR automorphisms}
\author{Stefan Fürdös}
\author{Bernhard Lamel}
\newcommand{\crb}{\mathcal{V}}
\newcommand{\bcrb}{\overline{\mathcal{V}}}
\newcommand{\Cn}{\mathbb{C}^n}
\newcommand{\C}{\mathbb{C}}
\newcommand{\CN}{\mathbb{C}^N}
\newcommand{\R}{\mathbb{R}}
\newcommand{\N}{\mathbb{N}}
\newcommand{\todo}[1]{}
\newlength{\extendaxesby}\setlength{\extendaxesby}{.4cm}
\DeclareMathOperator{\supp}{supp}
\DeclareMathOperator{\id}{Id}
\DeclareMathOperator{\spanc}{span}
\DeclareMathOperator{\imag}{Im}
\DeclareMathOperator{\real}{Re}
\DeclareMathOperator{\essup}{essupp}
\declaretheoremstyle[bodyfont=\normalfont]{noncursive}
\declaretheorem{theorem}
\declaretheorem{lemma}
\declaretheorem{proposition}
\declaretheorem{corollary}
\declaretheorem[style=noncursive]{definition}
\newcommand{\D}{\mathcal{D}}
\newcommand{\Dp}{\mathcal{D}^\prime}
\newcommand{\WF}{\mathrm{WF}\,}
\newcommand{\Char}{\mathrm{Char\,}}
\newcommand{\CotM}{T^*\Omega\!\setminus\!\{0\}}
\newcommand{\Vsmooth}{\mathcal{E}(\Omega,\C^\nu)}
\newcommand{\E}{\mathcal{E}}
\begin{document}

\ifpdf
\DeclareGraphicsExtensions{.pdf, .jpg, .tif}
\else
\DeclareGraphicsExtensions{.eps, .jpg}
\fi
\begin{abstract}
We study the regularity of infinitesimal CR automorphisms of 
abstract CR structures which possess a certain microlocal extension
and show that there are smooth multipliers, completely determined by the CR structure, such that if $X$ is such an infinitesimal 
CR automorphism, then $\lambda X$ is smooth for all multipliers
$\lambda$. As an application, we study the regularity of
infinitesimal automorphisms of certain infinite type hypersurfaces
in $\Cn$. 
\end{abstract}
\maketitle

\section{Introduction and statement of results} 
\label{sec:introduction_and_statement_of_results}
Regularity of CR diffeomorphisms has been intensely studied in the real-analytic setting. We recall
here the paper of Baouendi, Jacobowitz, and Treves 	\cite{Baouendi:1985wo} stating that every smooth
CR diffeomorphism of an essentially finite real-analytic submanifold $M$ of $\CN$ extending to a wedge with edge $M$ is 
actually real-analytic. The smoothness assumptions on the map can be relaxed considerably, and only 
a certain finite smoothness will suffice in order to guarantee real analyticity of the map. 

In the setting where the regularity of the underlying manifold is reduced from real-analytic to smooth,
much less is known. Early results concentrated on the setting of strictly pseudoconvex hypersurfaces, following Fefferman's celebrated mapping theorem \cite{Fefferman:1974tl}, 
as in 
 the paper of  Nirenberg-Webster-Yang \cite{Nirenberg:1980it}, but have been based on methods which do not carry over to 
more degenerate situations. The few regularity 
results we know are regularity theorems for 
finitely nondegenerate smooth submanifolds of $\CN$ (see the paper of the second author \cite{Lamel:2004hh}), 
and more recently,
the work of Berhanu and Ming on the regularity of 
embeddings \cite{Ming:2013wq}; rougher regularity results are also implicit
in the construction of a complete system as in the work of Ebenfelt \cite{Ebenfelt:2001th}.

However, all of these results only apply to
 {\em integrable} smooth CR structures, that is, 
 CR structures
which can be realized as smooth submanifolds of some $\CN$; the recent
work of 
Berhanu and Ming actually does away with 
the requirement that the {\em source manifold} is integrable, 
but the target manifold still is required 
to be integrable; their work has actually inspired the 
research presented here.

In the current paper, we tackle the purely 
abstract setting. This requires 
us to part with all techniques relying on the use of CR functions, as 
our abstract CR structures will in general {\em not have any} solutions. However, as
they might still possess {\em symmetries}, the question 
of the regularity properties of these symmetries
 is actually interesting. Our approach to the problem is inspired by
 the approach of Berhanu and Xiao \cite{Ming:2013wq}, to which this paper owes a lot. 

Before we can state our main theorem, we need some definitions. For definitions and details regarding the notion of 
abstract CR manifolds and infinitesimal CR automorphisms, see \autoref{sec:preliminaries}. In what follows, 
we consider an abstract CR manifold $(M,\crb)$ with CR bundle $\crb \subset \C TM$. We write $\dim_\R M = 2 n + d$, where $\dim_\C \crb_p = n$
for $p\in M$, and set $N= n+d$.

\begin{definition}
 Let $(M,\crb)$ be an abstract CR manifold, and $X$ an infinitesimal CR diffeomorphism (with distributional coefficients, see \autoref{sec:preliminaries}) of $M$. 
 We say  that $X$ extends microlocally 
 to a wedge with edge $M$
 if there exists a set $\Gamma \subset T^0 M$ 
 such that for each $p\in M$, the fiber
 $\Gamma_p \subset T^0_p M\!\setminus \!\{0\}$ is a closed, convex cone, 
 and 
 \[\WF (\omega(X)) \subset \Gamma^0\]
 for every 
 holomorphic form $\omega \in \Gamma(M,T'M)$ .
 \end{definition} 

 Our first result is that there exists an ideal
  $\mathcal S \subset \mathcal{E} (M) $ of smooth functions (determined
  by the CR structure alone) such that every 
  infinitesimal CR automorphism $X$ of $M$  which extends microlocally
  to a wedge with edge $M$ has the property that $\lambda X$ is 
  smooth on $M$ for every $\lambda\in\mathcal{S}$. 

  The ideal 
  $\mathcal{S}$ is 
  constructed in the following manner. Starting with  the space
   $E_0 = \Gamma (M, T^0 M)$ we define an increasing 
   sequence of submodules $E_k \subset \Gamma (M, T'M)$ by 
   \[ E_k = \spanc_{\mathcal{E} (M)}
    \left\{ K \mapsto \omega ([L,K]) \colon 
L \in \Gamma (M, \crb), \omega \in E_{k-1}
     \right\} , \quad k\geq 1 , \quad E = \bigcup_k E_k.\] 
We then define 
\[ \mathcal{S} = \bigwedge\nolimits^N E \]
and have the following:

\begin{theorem}
\label{thm:multipliers} Let $(M,\crb)$ be an abstract, smooth CR structure, 
and $X$ an infinitesimal CR diffeomorphism of $M$ with 
distributional coefficients 
which extends microlocally to a wedge with edge $M$. 
Then, for any 
$\omega\in E$, the evaluation
$\omega(X)$ is smooth, and for any $\lambda\in\mathcal{S}$, the vector field $\lambda X  $ is smooth.
\end{theorem}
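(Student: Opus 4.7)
The proof splits into two stages. Stage 1 shows, by induction on the filtration level $k$, that $\omega(X)$ is smooth for every $\omega\in E_k$, thereby exhausting $E=\bigcup_k E_k$. Stage 2 deduces smoothness of $\lambda X$ for $\lambda\in\mathcal{S}=\bigwedge^N E$ via Cramer's rule combined with the reality of $X$.

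For Stage 1 the central identity is
\[L\bigl(\omega(X)\bigr)=\omega\bigl([L,X]\bigr)+(\mathcal{L}_L\omega)(X),\qquad L\in\Gamma(\crb).\]
The infinitesimal CR automorphism property of $X$ gives a controlled structure for $\omega([L,X])$ which, together with the generating rule $K\mapsto\omega([L,K])$ defining $E_{k+1}$, identifies $L(\omega(X))$ modulo known smooth quantities with an evaluation of an $E_{k+1}$-form on $X$. The inductive step then follows: for $\omega\in E_{k+1}$ the identity reduces $\omega(X)$ to an $L$-derivative of an $E_k$-evaluation, smooth by induction. The base case $\omega\in E_0=\Gamma(M,T^0M)$ is where microlocal analysis enters: the wedge hypothesis yields $\WF(\omega(X))\subset\Gamma^0$; iterating with all $L\in\Gamma(\crb)$ and the parallel identity for $\bar L\in\Gamma(\bcrb)$ shows that every CR-vector-field derivative of $\omega(X)$ is smooth (modulo inductively-controlled terms). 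H\"ormander's theorem on propagation of singularities for the system $\crb\oplus\bcrb$---whose joint characteristic variety equals $T^0M$---then gives $\WF(\omega(X))\subset T^0M$; combined with the wedge bound, $\WF(\omega(X))\subset T^0M\cap\Gamma^0=\emptyset$, the last equality using that $\Gamma$ is a non-trivial convex cone in $T^0M$. Hence $\omega(X)$ is smooth.

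For Stage 2, at a point $p$ where $\omega_1,\dots,\omega_N\in E$ span $T'_pM$ (elsewhere $\omega_1\wedge\cdots\wedge\omega_N$ vanishes and the claim is trivial), pick a local frame $e_1,\dots,e_N$ of a complement to $\crb$ inside $\C TM$ and write $X=X'+X^{\crb}$ with $X'$ in the $\{e_j\}$-span. Since $T'M$ annihilates $\crb$, the functions $f_i:=\omega_i(X)=\omega_i(X')$ are smooth by Stage 1. Cramer's rule yields
\[\lambda\cdot X'=\sum_{j=1}^N\det\Omega^{(j)}\,e_j,\]
where $\Omega=(\omega_i(e_j))$ is smooth, $\Omega^{(j)}$ is obtained by replacing the $j$-th column of $\Omega$ by $(f_1,\dots,f_N)$, and $\lambda=\det\Omega$ is the coefficient of $\omega_1\wedge\cdots\wedge\omega_N$ in $e^1\wedge\cdots\wedge e^N$; the right-hand side is smooth, so $\lambda X'$ is smooth. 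Reality of $X$ combined with the parallel Stage 1 result for $\bar E$ (using the symmetry of the wedge hypothesis under conjugation) gives $\bar\lambda X^{\bcrb}$ smooth, hence $\lambda X^{\crb}=\overline{\bar\lambda X^{\bcrb}}$ is smooth, so $\lambda X$ is smooth. The principal obstacle is the base case of Stage 1: bootstrapping the wavefront bound from $\Gamma^0$ to emptiness via propagation of singularities on $\crb\oplus\bcrb$ while simultaneously verifying that the Lie-derivative construction stays within the $E_k$-filtration at each step.
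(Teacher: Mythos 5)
Your Stage 1 base case is where the argument breaks down. You assert
\[
\WF\bigl(\omega(X)\bigr)\subset T^0M\cap\Gamma^0=\emptyset,
\]
but $\Gamma^0$ is by construction a subset of $T^0M$ (it is the polar of a cone $\Gamma_p\subset T^0_pM$, and the wedge hypothesis asserts precisely that the wavefront set sits inside $\Gamma^0\subset T^0M$), so $T^0M\cap\Gamma^0=\Gamma^0$, which is not empty. If it were, the wedge hypothesis would already force every $\omega(X)$ to be smooth outright, making the whole theorem trivial. The ingredient you are missing is the \emph{reality} of $X$: for a characteristic form $\theta$ one has $\theta(X)=\overline{\theta(X)}$, and this is what produces the opposite-cone bound $\WF(\theta(X))=\WF(\overline{\theta(X)})\subset-\Gamma^0$. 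Only then does one get $\WF(\theta(X))\subset\Gamma^0\cap(-\Gamma^0)=\emptyset$. You do invoke reality, but only in Stage 2 to handle the $\crb$-component of the vector field; in the paper's argument the reflection identity is applied at the very start (in coordinates, $\sum_\ell A^{0,j}_\ell X_\ell=\sum_\ell\overline{A^{0,j}_\ell}\,\bar X_\ell$) and then propagated by applying $\mathcal L^\alpha$ to both sides, so that each generator $(\mathcal L^\alpha\theta^j)(X)$ extends from both sides simultaneously. Without this reflection, no amount of applying CR vector fields to $\omega(X)$ can pin its wavefront set to $\emptyset$, because the CR system alone is characteristic exactly on $T^0M$.

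A secondary issue: the tool that confines $\WF(X_j)$ to $T^0M$ is microlocal \emph{elliptic regularity} for the overdetermined system $L_kX_j=\sum_\ell B^j_{k,\ell}X_\ell$ (this is Theorem~\ref{thm:vec-prop} of the paper), not H\"ormander's propagation-of-singularities theorem; the two are different statements, and the latter is not what is being used. Also, there is no independent ``parallel identity for $\bar L\in\Gamma(\bcrb)$'': control on $\bar L$-derivatives is available only after the reality relation converts $\theta(X)$ into $\overline{\theta(X)}$, which again points back to the missing reflection step. The induction-on-$k$ structure of your Stage 1 and the Cramer's-rule argument in Stage 2 are reasonable reorganizations of the paper's argument --- the paper effectively does the same by applying $\mathcal L^\alpha$ to the reflected identity and then multiplying the resulting $N\times N$ system by the classical adjoint --- but the base case must carry the reflection, and as written your base case does not.
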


 In analogy 
to the integrable case, we will say  
that $M$ is finitely nondegenerate if 
 $\mathcal{S} = \mathcal{E} (M)$. Therefore, we have the following 
\begin{corollary}
\label{cor:finnondeg}  Let $(M,\crb)$ be an abstract, smooth, 
finitely nondegenerate CR structure, 
and $X$ a locally integrable infinitesimal CR diffeomorphism of $M$ with 
distributional coefficients 
which extends microlocally to a wedge with edge $M$. 
Then $X$ is smooth.
\end{corollary}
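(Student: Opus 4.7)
The plan is to reduce Corollary \ref{cor:finnondeg} directly to Theorem \ref{thm:multipliers} by linear-algebraic inversion, once the hypothesis of finite nondegeneracy is unpacked at the level of frames. Smoothness being local, we fix a point $p\in M$. Under the identification of $\mathcal{S}=\bigwedge^N E$ with an ideal of $\mathcal{E}(M)$ (via a trivialization of the line bundle $\bigwedge^N T'M$), the hypothesis $\mathcal{S}=\mathcal{E}(M)$ forces the existence of sections $\omega_1,\dots,\omega_N\in E$, defined on some neighborhood $U$ of $p$, whose wedge product $\omega_1\wedge\cdots\wedge\omega_N$ is nowhere vanishing on $U$. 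Since $T'M$ has complex rank $N$, the $\omega_k$ form a frame of $T'M$ over $U$, and by Theorem \ref{thm:multipliers} each $\omega_k(X)$ is smooth on $M$.

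To recover $X$ from the pairings $\omega_k(X)$, choose a local frame $Z_1,\dots,Z_n$ of $\crb$ over $U$ and a real local frame $T_1,\dots,T_d$ of a real complement $T_cM$ to $\crb\oplus\bcrb$ inside $\C TM$. Writing the real vector field $X$ in the associated (complex) frame $(Z_i,\bar Z_i, T_j)$ of $\C TM$ yields
\[
X=\sum_{i=1}^n a_i Z_i+\sum_{i=1}^n \overline{a_i}\,\bar Z_i+\sum_{j=1}^d c_j T_j,
\]
with $a_i$ complex-valued and $c_j$ real-valued $L^1_{\mathrm{loc}}$ coefficients (since $X$ is locally integrable). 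Because each $\omega_k$ annihilates $\bcrb$, the pairings $\omega_k(X)$ depend only on $(a_i,c_j)$, and the assignment $(a_1,\dots,a_n,c_1,\dots,c_d)\mapsto(\omega_k(X))_{k=1}^N$ is a linear map whose coefficient matrix is pointwise invertible with smooth entries --- this invertibility is exactly the statement that the $\omega_k$ form a frame of $T'M|_U$. Inverting expresses each $a_i$ and $c_j$ as a smooth combination of the smooth functions $\omega_k(X)$, so they are smooth; hence so is $X$ on $U$, and then, as $p$ was arbitrary, on all of $M$.

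The main obstacle I expect is not in this final linear-algebra step but in correctly translating the definition $\mathcal{S}=\mathcal{E}(M)$, phrased in terms of the abstract module $\bigwedge^N E$, into the concrete assertion that $E$ contains a local frame of $T'M$ near every point --- in particular, one needs to verify that the identification used to regard $\bigwedge^N E$ as a subset of $\mathcal{E}(M)$ is compatible with the role these forms play inside Theorem \ref{thm:multipliers}. One should also confirm that the pairing of a smooth form with the locally integrable vector field $X$ coincides with the distributional pairing employed there, but this is routine.
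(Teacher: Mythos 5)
Your proof is correct in substance, but it takes a noticeably longer route than the one the paper has in mind. The corollary is intended as an immediate consequence of the \emph{second} conclusion of \autoref{thm:multipliers}: the hypothesis of finite nondegeneracy is by definition $\mathcal{S}=\mathcal{E}(M)$, hence the constant function $1$ lies in $\mathcal{S}$, and Theorem~\ref{thm:multipliers} then says directly that $X=1\cdot X$ is smooth. No frame extraction, no linear inversion, and in fact no appeal to local integrability of $X$ is required.

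What you do instead is invoke the \emph{first} conclusion of \autoref{thm:multipliers} --- that $\omega(X)$ is smooth for each $\omega\in E$ --- argue that $\mathcal{S}=\mathcal{E}(M)$ forces a local frame $\omega_1,\dots,\omega_N$ of $T'M$ to exist inside $E$ (correct: a unit in the ideal is a smooth linear combination of determinants of the form \eqref{equ:basisfunctions}, so near each $p$ one such determinant is nonvanishing, which is exactly the frame condition), and then reconstruct $X$ from the smooth pairings $\omega_k(X)$. That reconstruction is sound, essentially reproducing \autoref{lem:holreal} concretely, but it is redundant given the multiplier formulation, and it also needlessly drags in the local integrability hypothesis in the sentence where you decompose $X$ into $L^1_{\mathrm{loc}}$ coefficients.

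One small but real slip in the reconstruction step: since $T'M=\crb^\perp$, the forms $\omega_k$ annihilate $\crb$, not $\bcrb$. Consequently $\omega_k(X)$ depends on $(\bar a_i,c_j)$, the coefficients along $\bar Z_i$ and $T_j$, rather than on $(a_i,c_j)$. This does not hurt your conclusion, because $X$ is real so the $a_i$ are recovered by conjugation, but as written the assertion is reversed. You should also note, for precision, that your $\omega_k$ are a priori only local sections, while $E$ is defined via global sections $\Gamma(M,T'M)$; this requires either multiplying by a cutoff or appealing to the local generators $\mathcal{L}^\alpha\theta^j$ of $E_k\vert_U$ that the paper sets up.
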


However, the condition that $M$ is actually finitely nondegenerate is
far too restrictive. 
We shall say that $(M,\crb)$ is CR-regular 
 if for every $p\in M$ there exists a $\lambda \in \mathcal{S}$ 
 with the property that near $p$, the zero set of $\lambda$ is 
 a real hypersurface in $M$, and such that $\lambda$ does not 
 vanish to infinite order at $p$.

\begin{theorem}
\label{thm:main} Let $(M,\crb)$ be an abstract CR structure, $p\in M$, and assume that $M$ is CR regular  near $p$. 
Then any locally integrable infinitesimal CR diffeomorphism of $M$ which 
extends microlocally to a wedge with edge $M$ is smooth. 
\end{theorem}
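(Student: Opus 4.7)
The plan is to leverage Theorem~\ref{thm:multipliers} in order to reduce the problem to a local division argument near $p$. By CR-regularity, I choose $\lambda \in \mathcal{S}$ such that in a neighborhood $U$ of $p$ the zero set $H = \{\lambda = 0\}$ is a smooth real hypersurface and $\lambda$ has finite order of vanishing $k$ at $p$. Theorem~\ref{thm:multipliers} then gives that $\lambda X$ is a smooth vector field on $M$. On $U \setminus H$, where $\lambda$ is non-vanishing, the equality $X = (\lambda X)/\lambda$ exhibits $X$ as smooth directly, so the task reduces to establishing smoothness of $X$ across $H$ at $p$.

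Working in local coordinates $(x_1,\ldots,x_{2n+d})$ centered at $p$ with $H = \{x_1 = 0\}$, the heart of the argument is a local factorization $\lambda = x_1^k u$ with $u$ smooth and $u(p) \neq 0$. Granted this, write $\lambda X = x_1^k(uX)$; smoothness of $\lambda X$ combined with local integrability of $uX$ forces $\lambda X$ to vanish to order at least $k$ along $H$, since otherwise the quotient $(\lambda X)/x_1^k$ would fail to be locally integrable by a test-function argument localized near $H$. Iterated application of Hadamard's lemma then produces $\lambda X = x_1^k Y$ for some smooth vector field $Y$, and the distributional identity $x_1^k(uX - Y) = 0$ together with $uX - Y \in L^1_{\mathrm{loc}}$ (which carries no distributional component supported on the measure-zero set $H$) gives $uX = Y$. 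Thus $X = Y/u$ is smooth in a neighborhood of $p$.

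The main obstacle is justifying the factorization $\lambda = x_1^k u$ with $u$ smooth and non-vanishing near $p$. This is immediate by iterated Hadamard when $\lambda$ vanishes to uniform order $k$ along $H$ near $p$, but may fail for general smooth functions with hypersurface zero set and finite order at $p$ (the example $\lambda = x_1(x_1^2 + x_2^2)$ on $\R^2$ has exactly these properties yet admits no such factorization). To get around this I expect that one must exploit that $\lambda$ is not arbitrary but lies in $\mathcal{S} = \bigwedge^N E$, so that its local structure is controlled by the CR geometry, and combine this with the additional smoothness of $\omega(X)$ for every $\omega \in E$ furnished by Theorem~\ref{thm:multipliers} together with the microlocal wedge extension hypothesis, which acts as a strong hypoelliptic constraint preventing pathological singularities on the exceptional locus within $H$.
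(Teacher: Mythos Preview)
Your overall strategy---apply Theorem~\ref{thm:multipliers} to obtain $\lambda X$ smooth, then divide by $\lambda$---is exactly the paper's approach. The paper packages the division step as a separate result (Proposition~\ref{prop:division}), and the proof of Theorem~\ref{thm:main} is then a two-line invocation of Theorem~\ref{thm:multipliers} followed by that proposition applied componentwise to the locally integrable coefficients of $X$.

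Where you diverge is in the last paragraph. You correctly observe that the factorization $\lambda = x_1^k u$ with $u$ smooth and nonvanishing requires $\lambda$ to vanish to \emph{uniform} order $k$ along $H$ near $p$, and your example $x_1(x_1^2 + x_2^2)$ shows this can fail under the bare hypotheses ``hypersurface zero set'' plus ``finite order of vanishing at $p$''. The paper's division theorem does not resolve this by appealing to any CR structure carried by $\lambda\in\mathcal{S}$ or to any further microlocal input; it simply \emph{assumes} the uniform transversal vanishing: its hypothesis is that there exist $v \in \R^{2n+d}$ and $k \in \N$ with $\partial_v^j \lambda(q) = 0$ for all $q \in \lambda^{-1}(0)$ near $p$ and $j < k$, while $\partial_v^k \lambda(p) \neq 0$. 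Under that hypothesis the factorization $\lambda = x_n^k \tilde\lambda$ with $\tilde\lambda \neq 0$ follows by iterated Taylor expansion in the transversal variable, and the remainder of the argument (local integrability forces $\lambda X$ to vanish to the same order along $H$, then divide) proceeds exactly as you sketch.

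So your speculation that one must exploit $\lambda \in \mathcal{S}$ or the wedge-extension hypothesis to close the gap is off-target: no such argument appears in the paper, and none is needed. What the paper does in effect is read ``CR-regular'' as supplying a multiplier that satisfies the stronger transversal hypothesis of Proposition~\ref{prop:division}. You have in fact put your finger on a small imprecision: the stated definition of CR-regularity is literally weaker than what Proposition~\ref{prop:division} requires, and your counterexample witnesses the gap. The intended reading is the stronger one (uniform transversal order along $H$), and with that reading your argument and the paper's coincide.
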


Without boundedness conditions on $X$, this theorem is actually 
in some sense optimal (even in the 
real-analytic case), 
as examples show (see \autoref{sec:example}).
The preceding theorem also implies a 
result in the embedded setting for  so-called ``weakly nondegenerate'' hypersurfaces. Weakly nondegenerate hypersurfaces are defined 
by the requirement
that there
exist coordinates $(z,w)\in\Cn\times\C$ and a $k\in\N $ such that $p=0$ in these
coordinates and that  near $p=0$,
$M$ is given by an equation of the form
\begin{align*}
\imag w &= (\real w)^{m} \varphi (z,\bar z, \real w),  \\
\intertext{where} 
\frac{\partial^{|\alpha| }\varphi }{\partial z^\alpha  } (0,0,0) &= \frac{\partial^{|\alpha|}\varphi }{\partial \bar z^\alpha} (0,0,0) = 0, \quad |\alpha| \leq k, 
\end{align*}
and 
\[ \spanc_\C \{ \varphi_{z{\bar z}^\alpha}  (0,0,0) \colon |\alpha|\leq k \} = \Cn.\]
If $k_0$ is the smallest $k$ for which the preceding condition holds, 
we say that $M$ is weakly $k_0$-nondegenerate $p$. 

\begin{corollary}
\label{cor:main2a}  Let $M\subset \CN$ be a 
 smooth hypersurface, $p\in M$, and 
assume that  $M$ is weakly $k$-nondegenerate  at $p$.
 Then any locally integrable infinitesimal CR diffeomorphism of $M$ which extends microlocally to a wedge with edge $M$ near $p$ is smooth 
 near $p$. 
\end{corollary}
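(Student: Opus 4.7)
The plan is to reduce to Theorem \ref{thm:main} by verifying that a weakly $k$-nondegenerate smooth hypersurface $M \subset \CN$ is CR regular near $p$. It suffices to produce a $\lambda \in \mathcal{S}$ whose zero set near $p$ coincides with the real hypersurface $\{s = 0\} \cap M$ (with $s = \real w$) and which does not vanish to infinite order at $p$; the same $\lambda$ then shows CR regularity at all points in a neighborhood of $p$ (using, if necessary, products $f \lambda$ with suitable $f \in \mathcal{E}(M)$ at points where $\lambda$ itself does not vanish), and Theorem \ref{thm:main} delivers the smoothness of $X$ near $p$.

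Arrange coordinates so $p = 0$ and $M$ is given by $\imag w = s^m \varphi(z, \bar z, s)$, and parametrize $M$ by $(z, \bar z, s)$. The CR bundle $\crb$ is framed by $L_j = \partial_{\bar z_j} + \gamma_j\, \partial_s$, where $\gamma_j = s^m G_j(z, \bar z, s)$ carries an explicit $s^m$ factor. A generator of $T^0 M$ is $\theta_0 = ds - \sum_k(\bar\gamma_k\, dz_k + \gamma_k\, d\bar z_k)$, and with $\Omega := ds - \sum_k \gamma_k\, d\bar z_k$ the collection $\{dz_1, \ldots, dz_n, \Omega\}$ is a frame of $T'M$; in this frame $\theta_0$ has coordinates $(-\bar\gamma_1, \ldots, -\bar\gamma_n, 1)$. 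Iterating the defining construction of $E_j$ starting from $\theta_0$ along a sequence $L_{i_1}, \ldots, L_{i_\kappa}$, and writing $\alpha = (i_1, \ldots, i_\kappa)$, an induction on $\kappa$ shows that the resulting form $\theta^{(\alpha)} \in E_\kappa$ has every $dz_l$-coefficient divisible by $s^m$, with leading $s^m$-coefficient at $(z, \bar z) = 0$ equal to a nonzero universal constant times $\varphi_{z_l\, \bar z^{\alpha}}(0)$.

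By weak $k$-nondegeneracy, choose multi-indices $\alpha_1, \ldots, \alpha_n$ with $|\alpha_j| \leq k$ such that $\varphi_{z,\, \bar z^{\alpha_j}}(0) \in \Cn$ are linearly independent, and set
\[\lambda := \theta_0 \wedge \theta^{(\alpha_1)} \wedge \cdots \wedge \theta^{(\alpha_n)} \in \bigwedge\nolimits^N E = \mathcal{S},\]
identified with a smooth function via the frame $dz_1 \wedge \cdots \wedge dz_n \wedge \Omega$. Inspecting the determinantal expansion, every term carries an $s^{nm}$ factor: either the $\Omega$-column is hit by the entry $1$ from the row $\theta_0$, leaving $n$ further $dz$-entries each divisible by $s^m$, or it is hit by an $\Omega$-entry $C^{(j)}$ from another row, in which case row $\theta_0$ contributes a $dz$-entry divisible by $s^m$ together with $n-1$ further $dz$-entries each divisible by $s^m$. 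Hence $\lambda = s^{nm} h$ for some smooth $h$, and
\[h(0) = c \det\bigl(\varphi_{z,\, \bar z^{\alpha_j}}(0)\bigr)_{j=1}^n \neq 0\]
for some nonzero constant $c$. By continuity $h$ is nonvanishing near $p$, so $\{\lambda = 0\} = \{s = 0\}$ is a real hypersurface through $p$ and $\lambda$ vanishes to order exactly $nm$ at $p$; this establishes the desired CR regularity near $p$, and Theorem \ref{thm:main} concludes the proof.

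The principal technical obstacle is the detailed bookkeeping underlying the two claims about $\theta^{(\alpha)}$: that every $dz_l$-coefficient is divisible by $s^m$, and that the leading $s^m$-coefficient at the origin is the expected vector $\varphi_{z_l,\, \bar z^\alpha}(0)$ up to a nonzero constant. Both require a careful accounting of the Leibniz-type corrections appearing in the iterated bracket construction and of the non-tensorial pieces of $\omega \mapsto \omega([L, \cdot])$; all such correction terms turn out to contribute only at strictly higher $s$-orders at the origin, thanks to the $s^m$ factor present in $\gamma_j$ together with the CR integrability identity satisfied by the $\gamma_k$.
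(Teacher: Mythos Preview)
Your proposal is correct and follows essentially the same route as the paper: both arguments reduce to Theorem~\ref{thm:main} by exhibiting a multiplier $\lambda\in\mathcal{S}$ of the form $s^{nm}$ times a smooth function nonvanishing at the origin, obtained as the determinant (equivalently, wedge) of $\theta$ together with $n$ iterated Lie derivatives $\mathcal{L}^{\alpha_j}\theta$, after verifying that each $dz_\ell$-coefficient of $\mathcal{L}^\alpha\theta$ carries an $s^m$ factor whose value at the origin is a nonzero constant times $\varphi_{z_\ell\bar z^\alpha}(0)$. The paper carries out the coefficient computation explicitly rather than by induction, but the structure and the key observation are the same.
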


The paper is structured as follows: In \autoref{sec:preliminaries}, we gather 
the necessary preliminaries concerning infinitesimal CR automorphisms
of abstract CR structures. In the following \autoref{sec:micro} we collect 
and prove the results of microlocal analysis which we will need. \autoref{sec:division} states a (rather simple) division theorem for smooth functions. The following
\autoref{sec:proof_of_} and \autoref{sec:proof_of_theorem_2} give the proofs of the main
results. An example illustrating the role of the multipliers is presented in 
\autoref{sec:example}.

{\bf{We would like to thank:}} an anonymous referee for a very careful reading and 
many helpful comments on the first version of the manuscript, and Shiferaw Berhanu and Michael Reiter for 
their detailed comments and helpful discussions. 

\section{Preliminaries} 
\label{sec:preliminaries}
In this section, we gather basic definitions and properties. More 
details and proofs of well known results which we do not prove here can 
be found in e.g. \cite{Baouendi:1999uy}.

An {\em{abstract CR manifold}} is a smooth real manifold $M$ together with 
a formally integrable smooth subbundle $\crb \subset \C TM$ which satisfies
$\crb \cap \bar \crb = \{0\} $. $\crb$ is called the {\em{CR bundle}} of 
$M$ and sections of $\crb$ are called {\em{CR vector fields}}. Throughout this paper,
we do not assume $M$ to be integrable, i.e. there might be no (or just a few)
\emph{solutions} of the structure or \emph{CR functions} (functions 
annihilated by all CR vector fields). $\dim_\C{} \crb = n$ is referred to 
as the \emph{CR dimension} of $M$ and we will write $\dim_\R{} M = 2n +d$.

A  map $H\colon M\supset U \to M$ of class $C^1$ is said to be CR (on $U$) if 
$dH \crb_p \subset \crb_{H(p)}$ for all $p \in U$. A vector field
 $X\colon M\supset U \to TM $ is an \emph{infinitesimal CR automorphism}
 if its local flow $H_\tau$, defined for $\tau\in\R$ small, has
  the property that for some $\varepsilon>0$, $H_\tau$ is a CR map if 
  $|\tau| < \varepsilon$. 


We will refer to the bundle $T'M:=\crb^\perp \subset \C T^* M$ as  the \emph{holomorphic cotangent bundle} of $M$, sections of $T'M$ are called
\emph{holomorphic forms}. 
Its real subbundle $T^0 M \subset T'M$,  consisting of all real dual vectors
that annihilate $\crb + \bar\crb$, is the 
\emph{characteristic bundle}. Sections of $T^0M$ are called \emph{characteristic forms}.

Let 
 $\mathfrak{Y}\in\Gamma (M, (T^{\prime} M)^{\ast})$. Recall that this is
 just the dual bundle to the space of holomorphic forms; in analogy
 to the notion of a holomorphic form, we will refer to such a $\mathfrak{Y}$ as 
 a {\em holomorphic vector field} (even though it is not holomorphic in the usual sense). Note that $ (T'M)^* = {\C TM}/{\crb}$.
 Every vector field $X\in \Gamma (M,TM) $ gives rise to a holomorphic vector field by restricting $X$ to $T'M$. 
 The following Lemma provides a converse. 

\begin{lemma}\label{lem:holo}
\label{lem:holreal} Let $\mathfrak{Y}\in\Gamma (M, (T^{\prime} M)^{\ast})$. Then 
there exists a unique real vector field $X\in \Gamma (M, TM)$ such that 
$\mathfrak{Y}$ is induced by $X$ if and only if 
\[ \mathfrak{Y} (\tau) = \overline{\mathfrak{Y} (\tau)} \]
for all characteristic forms $\tau\in\Gamma(M,T^0M)$.
\end{lemma}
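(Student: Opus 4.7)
The plan is to split the claim into necessity, uniqueness, and existence, with the real work being existence. For necessity, I would note that if $X\in\Gamma(M,TM)$ is real and induces $\mathfrak{Y}$, then for any real characteristic form $\tau\in\Gamma(M,T^0M)$ the value $\mathfrak{Y}(\tau)=\tau(X)$ is the pairing of a real covector with a real vector, hence real. For uniqueness, I would observe that if two real vector fields $X_1,X_2$ both induce $\mathfrak{Y}$, their difference $Y=X_1-X_2$ is a real section of $\C TM$ lying in $(\crb^\perp)^\perp=\crb$; conjugating shows $Y\in\bar\crb$ as well, and the assumption $\crb\cap\bar\crb=\{0\}$ forces $Y=0$.

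For existence, I would work locally and then patch using uniqueness. Given a local lift $Z\in\Gamma(U,\C TM)$ of $\mathfrak{Y}$ (which exists since $\C TM\to(T'M)^*\simeq \C TM/\crb$ is a surjection of smooth bundles), write $Z=A+iB$ with $A,B$ real. The goal is to find $L\in\Gamma(U,\crb)$ such that $Z-L$ is real, i.e.\ with $\operatorname{Im}(L)=B$. The key observation is that the set of imaginary parts of sections of $\crb$ is precisely the real subbundle
\[
\mathcal H=(\crb\oplus\bar\crb)\cap TM,
\]
of real rank $2n$, and this is exactly the annihilator in $TM$ of the characteristic bundle $T^0M$.

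The reality hypothesis on $\mathfrak{Y}$ translates directly into a condition on $B$: for $\tau\in\Gamma(M,T^0M)$ one has $\mathfrak{Y}(\tau)=\tau(Z)=\tau(A)+i\tau(B)$ with $\tau(A),\tau(B)\in\R$, so $\mathfrak{Y}(\tau)=\overline{\mathfrak{Y}(\tau)}$ for every such $\tau$ is equivalent to $\tau(B)=0$ for all $\tau\in\Gamma(M,T^0M)$, hence to $B\in\Gamma(U,\mathcal H)$. Once $B\in\Gamma(U,\mathcal H)$, the characterization of $\mathcal H$ yields $L\in\Gamma(U,\crb)$ with $\operatorname{Im}(L)=B$, and then $X:=Z-L$ is real and still induces $\mathfrak{Y}$. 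Uniqueness implies these local constructions agree on overlaps, defining a global real $X$.

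The main obstacle I anticipate is simply articulating the equivalence between the reality condition on $\mathfrak{Y}$ and the membership $B\in\mathcal H$; once one identifies $\mathcal H$ with the annihilator of $T^0M$ and as the set of imaginary parts of CR vector fields, the rest reduces to bookkeeping. A local frame $L_1,\dots,L_n$ of $\crb$ extended to a basis $L_j,\bar L_j,T_k$ of $\C TM$ with real $T_k$ can be invoked if a concrete verification is preferred, but the frame-free argument above is cleaner.
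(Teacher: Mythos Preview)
Your argument is correct. Necessity and uniqueness are fine as stated, and the existence step is sound: the map $\crb\to\mathcal H$, $L\mapsto\operatorname{Im}L$, is a real bundle isomorphism (injective since $\crb\cap\bar\crb=0$, surjective since every $V\in\mathcal H$ is $L+\bar L=2\operatorname{Re}L=\operatorname{Im}(2iL)$ for a unique $L\in\crb$), so the smooth section $L$ with $\operatorname{Im}L=B$ exists once you know $B\in\mathcal H$, and $X=Z-L=A-\operatorname{Re}L$ is the desired real lift.

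Your route is genuinely different from the paper's. The paper works on the dual side: using $(\C TM)^*=\crb^\perp+\bar\crb^\perp$, every form decomposes (nonuniquely) as $\omega=\alpha+\bar\beta$ with $\alpha,\beta\in T'M$, and one \emph{defines} $X$ directly by the formula $X(\omega)=\tfrac12\bigl(\mathfrak{Y}(\alpha)+\overline{\mathfrak{Y}(\beta)}\bigr)$; the reality hypothesis on $\mathfrak{Y}$ is exactly what makes this independent of the decomposition, since the ambiguity lives in $\crb^\perp\cap\bar\crb^\perp=\C T^0M$. Your approach instead lifts $\mathfrak{Y}$ to $\C TM$ and corrects by a CR field, translating the same hypothesis into $B\in(T^0M)^\perp=\mathcal H$. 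The paper's argument is a one-line formula once the decomposition of forms is in hand; yours is more geometric, makes the role of $\crb\cap\bar\crb=0$ in uniqueness explicit, and arguably clarifies why the characteristic bundle is the right object to test against. Both rest on the same duality $T^0M=(\crb\oplus\bar\crb)^\perp$.
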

Indeed, since $(\C TM)^{\ast}=\crb^\perp +\bcrb^\perp$ and $\C T^0 M=(\crb \oplus\bcrb)^\perp$, we can decompose any form
$\omega = \alpha + \bar \beta$ with $\alpha,\beta $ holomorphic forms in a nonunique manner. 
Thus $\mathfrak{Y}$ gives rise to a real vector field $X $ 
via 
\begin{equation*}
X(\omega) =
\frac{1}{2}\Bigl(\alpha \bigl(\mathfrak{Y}\bigr)+\overline{\beta \bigl(\mathfrak{Y}\bigr)}\Bigr)
\end{equation*}
which is well defined provided that $\mathfrak{Y}(\bar{\tau})=\overline{\mathfrak{Y}(\tau)}$ for all $\tau\in\Gamma(M,\C T^0 M)$ or equivalently, that
$\mathfrak{Y} (\tau)=\overline{\mathfrak{Y}(\tau)}$ for all $\tau\in  \Gamma(M,T^0 M)$, both of which are equivalent 
to the definition of $X$ above being independent of the decomposition $\omega=\alpha+\bar{\beta}$. 
We shall not distinguish between $X$ as a real vector field and as an element of $\Gamma(M,(T'M)^\ast)$.

Using the well known identity, see e.g.\ \cite{Helgason},
\[ \mathcal{L}_L \omega (K) = d\omega (L,K) + K \omega (L)  = L \omega (K) - \omega ( [L,K]),  \]
valid for arbitrary complex forms $\omega$ and complex vector fields $L,K$,
we see that the Lie derivative 
\[ \mathcal{L}_L \alpha (K) = d\alpha (L,K)  \]
of a holomorphic form $\alpha $ with respect to a CR vector field $L$ is again a holomorphic form.
We say that $\mathfrak{Y}\in\Gamma (M, (T^\prime M)^\ast)$ is \emph{CR} if
\[L \alpha(\mathfrak{Y})=d\alpha (L,\mathfrak{Y}) \]
for every CR vector field $L$ and every holomorphic form $\alpha$.  In particular,
if $X$ is a real vector field, then $X$ is CR if and only if
\[ \alpha ([L,X]) =0 \]
for every CR vector field $L$ and every holomorphic form $\alpha$.

\begin{proposition}\label{prop:inf-CR}
If $X$ is an infinitesimal CR automorphism of $M$, then $X\in \Gamma(M,(T^{\prime}M)^{\ast})$ is CR.
\end{proposition}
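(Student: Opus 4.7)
The plan is to derive the CR property of $X$ (as an element of $\Gamma(M,(T'M)^*)$) by differentiating the CR condition on its local flow at $\tau=0$. By hypothesis, $H_\tau$ is a CR diffeomorphism for $|\tau|<\varepsilon$, which is to say that $dH_\tau(\crb)\subset\crb$. Consequently, for every holomorphic form $\alpha\in\Gamma(M,T'M)$ and every CR vector field $L\in\Gamma(M,\crb)$,
\[ (H_\tau^*\alpha)(L) = \alpha(dH_\tau L) = 0, \qquad |\tau|<\varepsilon, \]
because $dH_\tau L$ lies in $\crb$ and $\alpha$ annihilates $\crb$. Differentiating this identically vanishing expression in $\tau$ at $\tau=0$, and using that $\frac{d}{d\tau}\big|_{\tau=0}H_\tau^*\alpha = \mathcal{L}_X\alpha$, we obtain $(\mathcal{L}_X\alpha)(L)=0$.

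To extract the desired conclusion, I would then invoke the Lie-derivative identity recalled in the excerpt, applied with the roles of $L$ and $X$ interchanged, namely
\[ (\mathcal{L}_X\alpha)(L) = X\bigl(\alpha(L)\bigr) - \alpha([X,L]). \]
Since $\alpha(L)=0$ by holomorphicity of $\alpha$ and CR-ness of $L$, the first term vanishes, leaving $\alpha([L,X])=0$, which is precisely the characterisation of CR-ness for a real vector field recalled immediately before the proposition. The extension from real $L$ to arbitrary complex sections of $\crb$ is automatic by $\C$-linearity of both the bracket and the pairing.

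I do not expect any genuine obstacle in this argument; the statement is essentially a formal consequence of the definition of an infinitesimal CR automorphism together with the standard functoriality of the Lie derivative of a form. The only thing that must be checked is that the pullback/Lie-derivative computation makes sense for complex-valued forms on a real manifold under a real flow, which is immediate because both $dH_\tau$ and $H_\tau^*$ extend $\C$-linearly to $\C TM$ and $\C T^*M$.
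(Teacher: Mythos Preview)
Your proof is correct and follows essentially the same strategy as the paper: both arguments differentiate the identity $(H_\tau^*\alpha)(L)=0$ at $\tau=0$ to obtain $\alpha([L,X])=0$. The only difference is cosmetic: the paper verifies the key step $\frac{d}{d\tau}\big|_{\tau=0}(H_\tau^*\alpha)(L)=\alpha([L,X])$ by an explicit coordinate computation, whereas you obtain it immediately from the standard identity $(\mathcal{L}_X\alpha)(L)=X(\alpha(L))-\alpha([X,L])$ together with $\alpha(L)=0$---an identity which the paper itself records in its preliminaries.
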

\begin{proof}
Let $H_\tau =\mathrm{Fl}_\tau^X$ denote the flow of X. 
By definition, $H_\tau$ satisfies the following differential equation:
\begin{equation*}
\frac{d H_\tau}{d \tau}(p)=X\circ H_\tau(p).
\end{equation*}
We note that $H_0=\id_M$ is trivially a CR map, but by assumption we know that  if $\tau$ is small then
\begin{equation*}
\omega\bigl((H_{\tau})_\ast L\bigr)=0
\end{equation*}
for any CR vector field $L$ and any holomorphic form $\omega$, i.e.\ $\omega(L)=0$.

We begin with the following general claim: For any triple $(Y,B,\alpha)$, where 
\begin{align*}
Y&=\sum_{j=1}^m Y_{j}\frac{\partial}{\partial x_{j}}\qquad Y_j\in\R\\ 
B&=\sum_{j=1}^m B_{j}\frac{\partial}{\partial x_{j}}\\
\alpha&=\sum_{j=1}^m \alpha^{j}dx^j
\end{align*}
are defined near $0$ and $\alpha (B)=0$, we have, if $K_{\tau}=\mathrm{Fl}^{Y}_{\tau}$,
\begin{equation*}
\frac{d}{d\tau}\bigl(K^{\ast}_{\tau}\alpha(B)\bigr)\bigr\vert_{\tau =0}=\alpha \bigl([B,Y]\bigr)
\end{equation*}
near the origin. For the convenience of the reader, we shall include the 
computation below.

Recalling the fact
\begin{equation*}
K^{\ast}_{\tau}\alpha\bigl(B\bigr)(p)=\alpha\bigl((K_{\tau})_{\ast}B\bigr)(K_{\tau}(p))=\sum_{j=1}^m\sum_{k=1}^m \bigl(\alpha^k\circ K_{\tau}\bigr)(p)B_j(p)\frac{\partial K^k}{\partial x_j}(p)
\end{equation*}
we can compute
\begin{equation*}
\begin{split}
\frac{d}{d\tau}\bigl(K^{\ast}_{\tau}\alpha(B)\bigr)(p)&=\sum_{j=1}^m\sum_{k=1}^m\frac{d}{d\tau}\biggl(\bigl(\alpha^k\circ K_{\tau}\bigr)(p)\frac{\partial K_{\tau}^k}{\partial x_j}(p)B_j(p)\biggr)\\
&=\sum_{j=1}^m\sum_{k=1}^m\sum_{\ell =1}^m \biggl(\frac{\partial \alpha^k}{\partial y_{\ell}}\circ K_{\tau}\!\biggr)(p)\bigl(Y_{\ell}\circ K_{\tau}\bigr)(p)\frac{\partial K_{\tau}^k}{\partial x_j}(p)B_j(p)\\
&\;\;\;\;\;+\sum_{j=1}^m\sum_{k=1}^m\sum_{\ell=1}^m\bigl(\alpha^{k}\circ K_{\tau}\bigr)(p) \biggl(\frac{\partial Y_{k}}{\partial y_{\ell}} \circ K_{\tau}\!\biggr)(p)\frac{\partial K^{\ell}_{\tau}}{\partial x_j}(p)B_j(p).
\end{split}
\end{equation*}
This leads immediately to 
\begin{equation*}
\begin{split}
\frac{d}{d\tau}\bigl(K^{\ast}_{\tau}\alpha(B)\bigr)\bigr\vert_{\tau =0}&=\sum_{k=1}^m\sum_{\ell =1}^m\biggl( \frac{\partial \alpha^k}{\partial x_{\ell}}Y_{\ell}B_{k}
+\alpha^{k} \frac{\partial Y_{k}}{\partial x_{\ell}} B_{\ell}\biggr)\\
&=\sum_{k=1}^m\sum_{\ell =1}^m\biggl(-\alpha^k Y_{\ell}\frac{\partial B_{k}}{\partial x_{\ell}}+\alpha^{k} \frac{\partial Y_{k}}{\partial x_{\ell}} B_{\ell}\biggr) \\
&=\alpha\bigl([B,Y]\bigr).
\end{split}
\end{equation*}

Now we set $Y=X$, $B=L$ and $\alpha=\omega$ as above. Then we have
\begin{equation*}
0 = \frac{d}{d\tau}\bigl(H^{\ast}_{\tau}\omega(L)\bigr)\bigr\vert_{\tau =0}=\omega\bigl([L,X]\bigr)
\end{equation*}
and hence $X$ is CR.
\end{proof}

We can now define what an 
infinitesimal CR diffeomorphism with distributional 
coefficients is. 

We say that a function $f:\, M\rightarrow\C$ is locally integrable if for any parametrization $\varphi:\,U\rightarrow M$ $f\circ\varphi$ is locally integrable on $M$. 
If $\mathsf{vol}(M)$ is the (complex) \emph{density bundle} of $M$ we define
\begin{equation*}
\D(M,\mathsf{vol}(M)):=\bigl\{\psi\in\Gamma (M,\mathsf{vol}(M))\colon\supp\subset\subset M\bigr\}
\end{equation*}
the space of compactly supported sections of $\mathsf{vol}(M)$ equipped with the usual topology.
Its strong dual $D^\prime(M)$ is the space of distributions on $M$, c.f.\ e.g.\ \cite{C-P} or \cite{MR0516965}.
A function $f:\,M\rightarrow \C$ is locally integrable if and only if 
\begin{equation*}
\int_M\!\lvert f\tau\rvert<\infty 
\end{equation*}
for all $\tau\in\D(M,\mathsf{vol}(M))$. Therefore any locally integrable function $f$ can be viewed as a distribution on $M$ in the usual way.

Furthermore we set
 \[\D(M,T'M\otimes\mathsf{vol}(M)) = \bigl\{ \omega\in \Gamma(M,T'M\otimes\mathsf{vol}(M)) \colon 
 \supp \omega \subset\subset M \bigr\}
  \]
 with the usual topology. 
 The strong dual $\D^\prime(M,(T^\prime M)^\ast):=(\D(M,T'M\!\otimes\!\mathsf{vol}(M)))^\prime$ is the space of 
 distributions (or generalized sections) on $M$ with values in $(T^\prime M)^\ast $.
 If $U\subset M$ is an open set where
  $\omega^1 , \dots ,\omega^N \in \Gamma(U,T'M)$ form a basis, 
  and $\omega_j = (\omega^j)^* \in \Gamma (U, (T'M)^*)$ is the dual basis  
  then an element $\mathfrak{Y}\in\D^\prime (M,(T'M)^\ast)$, when 
  restricted to $\D (M,(T'M\!\otimes\!\mathsf{vol}(M))|_U)$, is of the form
  \begin{equation}\label{LokRep}
  \mathfrak{Y}\vert_U=\sum_{j=1}^N c_j \omega_j,
  \end{equation} 
  where $c_j$ is a distribution on 
  $U$ for $j=1 , \dots , N$. (We also assumed that w.l.o.g.\ $U$ is small enough such that $\mathsf{vol}(M)\vert_U\cong U\times\C$.)
  We shall say that $\mathfrak{Y}\in\D^\prime (M,(T^\prime M)^\ast)$ is locally integrable if for any 
  representation of the form \eqref{LokRep} we have that $c_j$ are locally integrable functions on $U$. 
  
  We denote the usual duality bracket for $\mathfrak{Y}\in \D^\prime (M,(T^\prime M)^\ast)$ and 
$\omega\in\D(M,T^\prime\!\otimes\!\mathsf{vol}(M))$ by $\langle\mathfrak{Y},\omega\rangle\in\C$.
  
However, we can also consider a different bracket, i.e.\
\begin{equation*}
\{\,.\, ,\,.\,\}:\, \D^\prime (M,(T^\prime M)^\ast)\times \Gamma (M, T^\prime M)\longrightarrow \D^\prime (M),
\end{equation*}
which is defined locally as follows: 
On $U\subset M$ open as above we have the local representation $\eqref{LokRep}$ for $\mathfrak{Y}$ and we can write $\omega\vert_U=\sum_j f_j\omega^j$ with $f_j\in\E (U)$. We define
\begin{equation*}
\{\mathfrak{Y},\omega\}\vert_U:=f_jc_j\in\D^\prime (U).
\end{equation*}
We may write $\mathfrak{Y}(\omega)=\omega(\mathfrak{Y})=\{\mathfrak{Y},\omega\}$.
\begin{definition}
\label{def:infcrdiffeo} Let $\mathfrak{Y} \in \D^\prime(M,(T^\prime M)^\ast)$.
We say that $\mathfrak{Y}$ is an infinitesimal CR diffeomorphism
with distributional coefficients if 
\[ \mathfrak{Y} (\tau) = \overline{\mathfrak {Y} (\tau)} \]
for all $\tau \in 
\Gamma (M, T^0M)$
and 
if  $L \alpha (\mathfrak{Y}) = (\mathcal{L}_L \alpha) (\mathfrak{Y})$ for 
every $L\in\Gamma (M, \crb)$ and every $\alpha \in \Gamma(M, T'M)$.
\end{definition}

As already mentioned in the introduction, 
analogously to the integrable case, we consider
 the  increasing 
sequence  of $\E (M,\C)$ modules of forms
\begin{align*}
E_k=\bigl\langle \mathcal{L}_{K_1}\dots \mathcal{L}_{K_j}\theta\,:\;\; j\leq k,\: \: 
K_q\in \Gamma (M, \crb), \,  \theta \in \Gamma (M,T^0 M)
\bigr\rangle.
\end{align*}
We note that $E_0  = \Gamma (M, T^0 M)$,
 and $E_j \subset \Gamma(M, T' M) $ for all $j$, and 
 set $E = \bigcup_j E_j$.

We associate to the increasing chain $E_k$ the increasing sequence
of ideals $\mathcal{S}^k \subset \E (M,\C) $, where
\[  \mathcal{S}^k = \bigwedge\nolimits^N E_k =  
\left( \det \begin{pmatrix}
	V^1 ( \mathfrak{Y}_1 ) & \dots  & V^1 ( \mathfrak{Y}_N ) \\
	\vdots & & \vdots \\
	V^N ( \mathfrak{Y}_1 ) & \dots  & V^N ( \mathfrak{Y}_N ) \\
\end{pmatrix} \colon V^j \in E_k,\, \mathfrak{Y}_j \in \Gamma(M,(T'M)^*) \right).  \]  
Every $\mathcal{S}^k$ is an ideal; 
locally, one can find smaller sets of 
generators: Let $U\subset M$ be open, and  assume 
that $L_1,\dots,L_n$ is a local basis for  $\Gamma(U,\crb) $,  
that $\theta^1, \dots ,\theta^d$ is a local basis for $\Gamma(U,T^0M)$, 
and that $\omega^1, \dots, \omega^N$ is a local basis of 
$T'M$. We write 
$\mathcal{L}_j = \mathcal{L}_{L_j}$ for $j = 1, \dots, n$ and 
$\mathcal{L}^\alpha = \mathcal{L}_1^{\alpha_1} \dots \mathcal{L}_n^{\alpha_n}$
for any multi-index $\alpha = (\alpha_1, \dots, \alpha_n)\in \N^n$. 
We note that, since $\crb$ is formally integrable, the $\mathcal{L}^\alpha$, 
where $|\alpha| = k$, generate {\em all} $k$-th order homogeneous differential
operators in the $\mathcal{L}_j$, and  we thus  have 
\begin{align*}
E_k\big|_U =\bigl\langle \mathcal{L}^\alpha \theta^j \,:\;\; 1 \leq j\leq d,\: \: |\alpha| \leq k
\bigr\rangle.
\end{align*}

We can expand
\begin{equation}\label{e:exptheta} \mathcal{L}^\alpha \theta^j = \sum_{\ell=1}^{N} A^{\alpha,j}_\ell \omega^\ell \end{equation}
and for any choice
 $\underline\alpha = (\alpha^1, \dots, \alpha^N ) $  of multiindices
 $\alpha^1, \dots, \alpha^N \in \N^n $ and  $r = (r_1,\dots , r_N) \in \{1,\dots,d\}^N$  
 we define the functions
 \begin{equation}\label{equ:basisfunctions}
  D(\underline\alpha ,r)(q) = \det 
  \begin{pmatrix}
   	A^{\alpha^1,r_1}_1 & \dots & A^{\alpha^1,r_1}_N \\
   	\vdots & & \vdots \\
   	   	A^{\alpha^N,r_N}_1 & \dots & A^{\alpha^N,r_N}_N \\
   \end{pmatrix} .
\end{equation} 
With this notation, we have 
\[ \mathcal{S}^k \big|_U = \left( D(\underline\alpha ,r)\colon |\alpha^j|\leq k \right);
\]
 we shall denote the stalk of $\mathcal{S}^k$ at $p$ by $\mathcal{S}^k_{p}$.

\section{Microlocal Analysis for vector-valued Distributions}\label{sec:micro}
We gather in this section the necessary preliminary results about 
the wavefront set of sections of bundles satisfying a system of 
PDEs. 
1971 H{\"o}rmander \cite{Hoe71a,Hoe71b} introduced the notion of wavefront set. 
One of the first consequences of its definition is the microlocal elliptic regularity theorem, i.e.\
 for any distribution $u$ and (pseudo-)differential operator $P$ we have
\begin{equation}\label{PropSingSc}
\WF u\subseteq \WF Pu\cup \Char P.
\end{equation}
For a  CR distribution $v$ on a CR manifold $(M,\crb)$ the fact above amounts to saying that
 $\WF v\subset T^0M$. In order to prove the analogous fact for a CR section 
$\mathfrak{Y}$ of $ (T^{\prime}M)^{\ast}$, we need that the microlocal elliptic regularity theorem 
holds also for 
vector-valued distributions and $P$ being a square matrix of differential operators.

Indeed, a simple adaption of the arguments that establish  
\eqref{PropSingSc} in the scalar case also provides a proof in the multidimensional situation. 
However, despite relation \eqref{PropSingSc} for scalar operators being a classical result in microlocal 
analysis that is treated in numerous books e.g.\ \cite{Ho,Le,GS,Shubin} and the analogous statement 
for vector-valued distributions implicitly mentioned in the literature, 
see e.g.\ \cite{Dencker}, we were not able to find a definite source for the vector-valued case 
with precisely the statements proven  we need. 
Hence for the convenience of the reader who are not acquainted 
with microlocal analysis and pseudodifferential operators 
we try here to give a rather self-contained proof of \eqref{PropSingSc} 
for vector-valued distributions and matrix differential operators.
We mainly follow the exposition of \cite{Joshi99}, see also \cite{GS, Le, Shubin}.

Let $\Omega\subseteq\R^n$ always be an open set. 
A set $\Gamma\subseteq\R^n$ is a \emph{cone} if $\lambda\cdot x\in\Gamma$ for all 
$x\in\Gamma$ and $\lambda>0$. 
We say that a subset $V\subseteq \CotM =\Omega\times(\R^n\!\setminus\!\{0\})$ is \emph{conic}, 
if for all $(x,\xi)\in V$ and real numbers $\lambda >0$ we have $(x,\lambda\xi)\in V$. 
Sometimes we call also a conic set $V\subseteq \CotM$ a cone.
A conic neighbourhood of a point $\xi_0$ is an open cone $\Gamma$ containing $\xi_0$. 
Similarly we call an open conic set $V\subseteq \CotM$ a neighbourhood of the point 
$(x_0,\xi_0)$, if $(x_0,\xi_0)\in V$.

The space of smooth functions on $\Omega$ with values in $\C^\nu$ will be denoted by $\Vsmooth$. 
If $\nu=1$ we also write simply $\E(\Omega)$. 
As usual the space of test functions $\D (\Omega,\C^\nu)$ consists of all functions 
$f\in\Vsmooth$ with compact support.
The space of vector-valued distributions on $\Omega$ is denoted by 
$\Dp(\Omega,\C^\nu)\cong(\Dp(\Omega))^\nu$, 
whereas the space of distributions of compact support is written as 
$\E^{\prime}(\Omega)\subset\Dp(\Omega)$. 
We recall that the Fourier transform 
\begin{equation*}
\hat{v}(\xi)=\mathfrak{F}(v)(\xi)=\int\!e^{-ix\xi}v(x)\,dx
\end{equation*}
of $v\in\E^{\prime}(\Omega)$ is an analytic function. 
In general, the integral means the duality bracket for distributions.
It is well known that a distribution $v\in\E^{\prime}$ is smooth 
iff its Fourier transform $\hat{v}(\xi)$ is rapidly decreasing for $\lvert\xi\rvert\rightarrow \infty$.
Now this observation leads to definition of the \emph{wavefront set} of a distribution $u\in\Dp(\Omega)$.
\begin{definition}
Let $(x_0,\xi_0)\in \CotM$. 
The point $(x_0,\xi_0)$ is not in $\WF u$ iff there is $\varphi\in\D(\Omega)$ with $\varphi\equiv 1$ near $x_0$ and a conic neighbourhood $\Gamma$ of $\xi_0$ such that
\begin{equation}\label{WFest}
\sup_{\xi\in\Gamma}\lvert\xi\rvert^N\lvert\widehat{\varphi v}(\xi)\rvert<\infty \qquad \forall N.
\end{equation}
\end{definition}
We note that if $(x_0,\xi_0)\notin\WF u$ then $(x_0,-\xi_0)\notin \WF\bar{u}$, here the conjugate $\bar{u}$ of $u\in\Dp$ is defined by $(\bar{u},\varphi)=(u,\bar{\varphi})$.

The wavefront set of $u=(u_1,\dots ,u_\nu)\in\Dp (\Omega,\C^\nu)$ is then defined as $\WF u=\bigcup_{j=1}^\nu \WF u_j$. 
Obviously we have the following characterization of $\WF u$:
A point $(x_0,\xi_0)\in \CotM$ is not in $\WF u$ iff there is a test function $\varphi\in\D(\Omega)$ with $\varphi\equiv 1$ near $x_0$ such that \eqref{WFest} for each $\widehat{\varphi u_j}$ on a common conic neighbourhood $\Gamma$ of $\xi_0$.
It is easy to see that $\WF u$ is a conic set.

Before we can finally begin with the proof of \eqref{PropSingSc}, we have to introduce matrix-valued pseudodifferential operators. 
We may assume that the theory of scalar-valued pseudodifferential operators on open sets is known to the reader, a detailed introduction can be found in \cite{Ho, Le, GS}.

A pseudodifferential operator $P$ of order $m$ is an operator $P: \D (\Omega,\C^\nu)\rightarrow \Vsmooth$ of the form 
\begin{equation}
P=\begin{pmatrix}
P^{11} & \dots  &P^{1\mu}\\
\vdots & \ddots & \vdots \\
P^{\nu 1} & \dots & P^{\nu\mu} 
\end{pmatrix}
\end{equation}
where $P^{jk}\in\Psi_{0,1}^m(\Omega)$ are scalar pseudodifferential operators of order $m$.
The symbol of $P$ is the matrix
\begin{equation}
p(x,\xi)=\begin{pmatrix}
p^{11}(x,\xi) & \dots & p^{1\mu}(x,\xi)\\
\vdots & \ddots & \vdots \\
p^{\nu 1}(x,\xi) & \dots & p^{\nu\mu}(x,\xi) 
\end{pmatrix}
\end{equation}
whose entries $p^{jk}\in S^m(\Omega\times\R^n)$ are the symbols of the operators $P^{jk}$. 
The class of pseudodifferential operators of order $m$ between vector-valued functions is denoted by $\Psi^m(\Omega ,\C^\nu)$. 
As in the case of scalar operators the elements of the set $\Psi^{-\infty}(\Omega)=\bigcap\Psi^m(\Omega)$ are called smoothing operators; 
if $Q\in\Psi^{-\infty}(\Omega)$ then $Q(\E^{\prime}(\Omega,\C^\nu))\subseteq \Vsmooth$.
As with scalar operators we can associate to each pseudodifferential operator $P$ a properly supported pseudodifferential operator $\tilde{P}$, i.e.
\begin{align*}
 \tilde{P}: \E^{\prime}(\Omega,\C^\nu)&\rightarrow \E^{\prime}(\Omega,\C^\nu) \\
\intertext{and}
\tilde{P}: \Dp(\Omega,\C^\nu)&\rightarrow \Dp(\Omega ,\C^\nu)
\end{align*}
respectively, with  $P-\tilde{P}\in\Psi^{-\infty}$ since we can repeat the procedure in the scalar case (see e.g.\cite{Le}) in each entry separately to obtain the desired operator.
Similarly we can construct to each sequence $a_j\in S^{m-j}$ a symbol $a\in S^m$ such that $a-\sum_{j<N}a_j\in S^{m-N}$ by also repeating the proof from the scalar case, cf.\ \cite{Le}. 
We will use the notation $a\sim\sum a_j$.

The composition of two properly supported pseudodifferential operators $A ,B$ of order $m_1$ and $m_2$ respectively is the operator $C$ given by the matrix with entries
\begin{equation*}
C^{j\ell}=\sum_{k=1}^\nu A^{jk}B^{k\ell}.
\end{equation*}
For the symbol $c$ of $C$ we write $a\sharp b$. 
We have that the symbol $a^{jk}\sharp b^{k\ell}$ of $A^{jk}\circ B^{k\ell}$ must satisfy the following expansion (c.f.\ \cite{Le}) 
\begin{equation*}
a^{jk}\sharp b^{k\ell}\sim\sum_{\alpha}\frac{\partial^{\alpha}_\xi a^{jk}(x,\xi)D_x^{\alpha}b^{k\ell}(x,\xi)}{\alpha !},
\end{equation*} 
hence
\begin{equation*}
c^{j\ell}\sim\sum_{k=1}^\nu\sum_{\alpha}\frac{\partial^{\alpha}_\xi a^{jk}(x,\xi)D_x^{\alpha}b^{k\ell}(x,\xi)}{\alpha !}.
\end{equation*}
We see that the analogous formula has to be valid in the matrix case
\begin{equation}
a \sharp b\sim\sum_{\alpha}\frac{\partial^{\alpha}_\xi a(x,\xi)D_x^{\alpha}b(x,\xi)}{\alpha !}.
\end{equation}

As in the case of scalar operators (see e.g.\ \cite{GS}) we say that a properly supported operator $P\in\Psi^m_{ps}(\Omega, \C^\nu)$
is a classical pseudodifferential operator if there are smooth functions $p_{m-j}$ on $\CotM$ that are homogenous of degree $m-j$ in the second variable
and $\psi\in\D({\R^n})$ with $\psi\equiv 1$ near the origin such that the symbol $p$ of $P$ satisfies the following asymptotic expansion
\begin{equation*}
\psi(x,\xi)\sim\sum_{j} (1-\psi(\xi))p_{m-j}(x,\xi).
\end{equation*}
In slight abuse of notation we write also
\begin{equation}
p\sim\sum p_{m-j}
\end{equation}
and we will sometimes refer to the (formal) series $\sum p_{m-j}$ as the symbol of $P$. 
The term of highest order $p_m$ in the series is called the principal symbol of $P$.
The class of classical operators of order $m$ will be denoted by $\Psi_{cl}^m(\Omega,\C^\nu)$ and the 
term of highest order in the asymptotic expansion is called the principal symbol of the operators.
If $A,B$ are two classical pseudodifferential operators of order $m_1$ and $m_2$, resp.\ , 
then we have that $C=A\circ B$ is a classical operator of order $m_1+m_2$ and,
 if $c\sim\sum c_{m-\ell}$ and $m=m_1+m_2$,
\begin{equation}
c_{m-\ell}=\sum_{j+k+\lvert\alpha\rvert=\ell}\frac{1}{\alpha!}\partial_{\xi}^{\alpha}a_{m_1-j}(x,\xi)D_x^{\alpha}b_{m_2-k}(x,\xi).
\end{equation}
We see that for the principal symbols, i.e.\ $\ell=0$, the above equation is just $c_m=a_{m_1}b_{m_2}$.

We close this very short introduction with two definitions, that are completely analogous 
to the definitions for scalar operators, see \cite{Le}.
\begin{definition}
The essential support $\mathrm{essupp}\, A\subseteq \CotM$ of $A\in\Psi^m_{ps}(\Omega,\C^\nu)$
 is defined by saying that a point $(x_0,\xi_0)$ is not in $\mathrm{essupp}\, A$ 
 if there is a conic neighbourhood of $(x_0,\xi_0)$ such that
\begin{equation}
\sup_{(x,\xi)\in\Gamma}\Bigl\lvert\bigl(\partial_{\xi}^\alpha\partial_x^\beta a^{jk}\bigr)\Bigr\rvert(x,\xi)\lvert\xi\rvert^N<\infty
\end{equation}
for all $\alpha,\beta\in\N_0^n$, $N\in\N$ and $j,k=1,\dots, \nu$.
\end{definition}
If $A,B\in\Psi^{\infty}_{ps}(\Omega,\C^\nu)$ then 
$\mathrm{essupp}\, AB\subseteq \mathrm{essupp}\,A\cap\mathrm{essupp}\,B$.
\begin{definition}
An operator $A\in\Psi_{cl}^m(\Omega ,\C^\nu)$ is elliptic or non-characteristic 
at $(x_0,\xi_0)\in \CotM$ if the principal symbol $a_m$ of $A$ is invertible at $(x_0,\xi_0)$. 
We set
\begin{equation}
\Char A:=\bigl\{(x,\xi)\in \CotM\mid a_m(x,\xi)\text{ is not invertible}\bigr\}
\end{equation}
\end{definition}

Now we are able to start with the proof of \eqref{PropSingSc} for vector-valued distributions.
\begin{theorem}\label{micro-parametrix}
Let $P\in\Psi_{cl}^m(\Omega,\C^\nu)$ be elliptic at $(x_0,\xi_0)\in \CotM$.
Then there are operators $Q\in\Psi^{-m}_{cl}(\Omega,\C^\nu)$
 and $R,S\in\Psi^0_{cl}(\Omega ,\C^\nu)$ such that
\begin{align*}
QP&=\id +R & (x_0,\xi_0)&\notin\essup R\\
PQ&=\mathrm{Id} +S & (x_0,\xi_0)&\notin\mathrm{essupp}\, S
\end{align*}
\end{theorem}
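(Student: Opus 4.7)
The plan is to construct $Q$ by building its symbol as an asymptotic series $q \sim \sum_{j\geq 0} q_{-m-j}$, solving at each order so that $q \sharp p$ agrees with the identity matrix modulo terms whose essential support avoids $(x_0,\xi_0)$. Since $P$ is elliptic at $(x_0,\xi_0)$, the principal symbol $p_m(x_0,\xi_0)$ is an invertible $\nu\times\nu$ matrix. By continuity of the determinant, there is an open conic neighborhood $\Gamma\subseteq\CotM$ of $(x_0,\xi_0)$ on which $p_m(x,\xi)$ remains invertible with smooth inverse. Pick a symbol $\chi\in S^0(\Omega\times\R^n)$ which is homogeneous of degree $0$ in $\xi$ for $|\xi|\geq 1$, identically equal to $1$ on a smaller conic neighborhood $\Gamma_0$ of $(x_0,\xi_0)$, and supported in $\Gamma$. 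Set
\begin{equation*}
q_{-m}(x,\xi) := \chi(x,\xi)\, p_m(x,\xi)^{-1},
\end{equation*}
which is a well-defined matrix-valued symbol in $S^{-m}$, homogeneous of degree $-m$ for large $\xi$.

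Next I would determine $q_{-m-j}$ recursively. Using the matrix composition formula recalled in the excerpt,
\begin{equation*}
(q\sharp p)_{-\ell} = \sum_{j+k+|\alpha|=\ell}\frac{1}{\alpha!}\partial_\xi^\alpha q_{-m-j}(x,\xi)\, D_x^\alpha p_{m-k}(x,\xi),
\end{equation*}
the leading term is $q_{-m} p_m = \chi\, I_\nu$, which equals $I_\nu$ on $\Gamma_0$. At the $(-\ell)$-th step, isolating the unknown $q_{-m-\ell}$ gives
\begin{equation*}
q_{-m-\ell}\, p_m = -\sum_{\substack{j+k+|\alpha|=\ell\\ j<\ell}}\frac{1}{\alpha!}\partial_\xi^\alpha q_{-m-j}\, D_x^\alpha p_{m-k}\quad\text{on } \Gamma_0,
\end{equation*}
and I simply right-multiply by $\chi\, p_m^{-1}$ to define $q_{-m-\ell}\in S^{-m-\ell}$ (supported in $\Gamma$, homogeneous of degree $-m-\ell$ on $\{|\xi|\geq 1\}$). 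By the Borel-type summation procedure recalled in the text, there exists a symbol $q\in S^{-m}$ with $q\sim \sum q_{-m-j}$; let $Q\in\Psi_{cl}^{-m}(\Omega,\C^\nu)$ be the corresponding properly supported classical operator.

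By construction the symbol of $QP-\id$ vanishes to every order on $\Gamma_0$, i.e.\ for every $N$ its components satisfy the essential support estimates on a conic neighborhood of $(x_0,\xi_0)$; hence the operator $R := QP-\id\in\Psi_{cl}^0(\Omega,\C^\nu)$ satisfies $(x_0,\xi_0)\notin\essup R$. For the right parametrix I would repeat the argument, this time choosing $\tilde q_{-m} = \chi\, p_m^{-1}$ and inductively solving $p_m\, \tilde q_{-m-\ell} = -\sum_{\substack{j+k+|\alpha|=\ell\\j<\ell}}\frac{1}{\alpha!}\partial_\xi^\alpha p_{m-k}\, D_x^\alpha \tilde q_{-m-j}$ on $\Gamma_0$ to obtain $\tilde Q$ with $P\tilde Q = \id + S$, $(x_0,\xi_0)\notin\essup S$. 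A standard argument shows $Q$ and $\tilde Q$ differ by a smoothing operator microlocally near $(x_0,\xi_0)$, so the same $Q$ serves both purposes up to changing $R$ and $S$ by operators whose essential supports miss $(x_0,\xi_0)$.

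The only genuine obstacle is the non-commutativity of matrix multiplication: one must be careful to multiply by $p_m^{-1}$ on the correct side at each step of the recursion and to respect the order of factors in the composition formula. Once this bookkeeping is fixed, the construction proceeds exactly as in the scalar case, so the theorem reduces to the scalar parametrix construction applied entrywise with matrix-valued coefficients.
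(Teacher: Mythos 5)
Your proposal follows essentially the same route as the paper: recursively solve for the symbol terms of $Q$ via the matrix composition expansion, localize with a $0$-homogeneous cutoff to a conic neighbourhood where $p_m$ is invertible, Borel-sum, and then upgrade the two one-sided parametrices to a single $Q$ serving both roles. Two small points of comparison: you correctly flag that for the left parametrix one must right-multiply by $p_m^{-1}$ (the paper's displayed recursion writes $(p_m)^{-1}$ on the left, which is a typo that matters only in the noncommutative matrix setting you are emphasizing); and where you appeal to a ``standard argument'' to conclude that $Q$ and $\tilde Q$ agree microlocally near $(x_0,\xi_0)$, the paper carries this out explicitly via the identity $Q = Q(PQ_1 - S_1) = Q_1 + RQ_1 - QS_1$ and then $PQ = \mathrm{Id} + S_1 + PRQ_1 - PQS_1$, which is worth writing out since it is exactly what lets one re-use the same $Q$ while only perturbing $R,S$ by operators whose essential supports avoid $(x_0,\xi_0)$.
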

\begin{proof}
We set $\tilde{q}_{-m}=(p_m)^{-1}$ in some conic neighbourhood $\Gamma$ of $(x_0,\xi_0)$ 
where $\det p_m \neq 0$.
Recursively we define on $\Gamma$
\begin{equation*}
\tilde{q}_{-m-N}(x,\xi)=-(p_m(x,\xi))^{-1}\negthickspace\sum_{\substack{j+k+\lvert\alpha\rvert=N \\ 
j\leq N-1}} \frac{1}{\alpha !}\partial_{\xi}^\alpha \tilde{q}_{m-j}(x,\xi)D_x^{\alpha}p_{m-k}(x,\xi)
\end{equation*}
Using a suitable cut-off function $\psi\in\E(\CotM)$, i.e.\ $\mathrm{supp}\,\psi\subseteq\Gamma$, 
$\psi\equiv 1$ near $(x_0,\xi_0)$ and $\psi$ is homogeneous of degree $0$ in the second variable, 
we can extend the functions $\tilde{q}_{-m-k}$ to the whole space $\CotM$ 
by putting $q_{-m-k}=\psi \tilde{q}_{-m-k}$. 
Let $Q$ be the classical pseudodifferential operator associated to the symbol $\sum q_{-m-k}$. 
Then clearly $QP\in\Psi_{cl}^0(\Omega,\C^\nu)$ and therefore 
$R:=\mathrm{Id}-QP\in\Psi^0_{cl}(\Omega,\C^\nu)$.
If $\sum r_{-j}$ is the symbol of $R$ then it follows that $r_{-j}\equiv 0$ 
in some conic neighbourhood of $(x_0,\xi_0)$ by construction.
Hence $(x_0,\xi_0)\notin\mathrm{essupp}\, R$.

Analogously we can construct $Q_1\in\Psi^{-m}_{cl}(\Omega,\C^\nu)$ 
such that $PQ_1=\mathrm{Id}+S_1$ with $S_1\in\Psi^{0}_{cl}(\Omega ,\C^nu)$ and 
$(x_0,\xi_0)\notin\mathrm{essupp}\,S_1$. 
Following an argument in \cite{Le} we conclude that
\begin{equation*}
Q=Q(PQ_1-S_1)=(\mathrm{Id}+R)Q_1-QS_1=Q_1+RQ_1-QS_1
\end{equation*}
and
\begin{equation*}
PQ=PQ_1+PRQ_1-PQS_1=\mathrm{Id} +S_1+ PRQ_1-PQS_1=\mathrm{Id}+ S
\end{equation*}
where $S=S_1+PRQ_1-PQS_1\in\Psi^0_{cl}(\Omega,\C^\nu)$.
 Clearly $(x_0,\xi_0)\notin\mathrm{essupp}\, S$.
\end{proof}

\begin{proposition}\label{WFChar}
Let $u\in\Dp(\Omega,\C^\nu)$. Then we have
\begin{equation*}
\WF u=\negthickspace\negmedspace\bigcap_{\substack{P\in\Psi^{\infty}(\Omega ,\C^\nu)\\
 Pu\in\Vsmooth}}\negthickspace\negthickspace\negthickspace\negthickspace \Char P.
\end{equation*}
\end{proposition}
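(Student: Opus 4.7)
The plan is to prove the two inclusions separately, both by contrapositive arguments built on Theorem~\ref{micro-parametrix}.

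For the inclusion $\WF u \subseteq \bigcap \Char P$, I would pick any $P \in \Psi^\infty(\Omega,\C^\nu)$ with $Pu \in \Vsmooth$ and show that $\WF u \subseteq \Char P$. Take $(x_0,\xi_0) \notin \Char P$, so $P$ is elliptic there, and apply Theorem~\ref{micro-parametrix} to obtain $Q \in \Psi^{-m}_{cl}(\Omega,\C^\nu)$ and $R \in \Psi^0_{cl}(\Omega,\C^\nu)$ with $QP = \mathrm{Id} + R$ and $(x_0,\xi_0) \notin \essup R$. Replacing $Q$ by a properly supported representative (at the cost of a globally smoothing error) and writing $u = Q(Pu) - Ru$, the first term lies in $\Vsmooth$ because $Q$ maps $\Vsmooth$ into $\Vsmooth$ and $Pu\in\Vsmooth$ by hypothesis. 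The standard inclusion $\WF(Ru) \subseteq \essup R$, which is proved componentwise exactly as for scalar operators (cf.~\cite{Le}), then excludes $(x_0,\xi_0)$ from $\WF(Ru)$; consequently $(x_0,\xi_0) \notin \WF u$.

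For the reverse inclusion I would proceed contrapositively: given $(x_0,\xi_0) \notin \WF u$, I would exhibit a classical $P$ with $Pu \in \Vsmooth$ and $(x_0,\xi_0) \notin \Char P$. By definition of $\WF u$ there exist $\varphi \in \D(\Omega)$ with $\varphi \equiv 1$ on a neighborhood of $x_0$ and a conic neighborhood $\Gamma$ of $\xi_0$ such that $\widehat{\varphi u_j}$ is rapidly decreasing on $\Gamma$ for every component $j$. Choose $\phi_1 \in \D(\Omega)$ with $\supp \phi_1 \subset \{\varphi \equiv 1\}$ and $\phi_1 \equiv 1$ near $x_0$, and $\kappa \in \E(\R^n)$ that vanishes near the origin, is positively homogeneous of degree zero outside a compact set, is supported in $\Gamma$ there, and equals $1$ on a conic neighborhood of $\xi_0$. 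Set $P = \chi(x,D) \cdot \mathrm{Id}_\nu$ with scalar symbol $\chi(x,\xi) = \phi_1(x)\kappa(\xi)$: this is classical of order zero with principal symbol $\phi_1(x)\kappa(\xi)\mathrm{Id}_\nu$, invertible at $(x_0,\xi_0)$, so $(x_0,\xi_0) \notin \Char P$.

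To verify $Pu \in \Vsmooth$ it suffices to treat one component $\chi(x,D) u_j$, which I would split as $\chi(x,D)(\varphi u_j) + \chi(x,D)\bigl((1-\varphi)u_j\bigr)$. The first term is given by the absolutely convergent oscillatory integral $(2\pi)^{-n}\int e^{ix\xi}\chi(x,\xi)\widehat{\varphi u_j}(\xi)\,d\xi$, and the same remains true after any number of differentiations in $x$, because on $\supp_\xi \chi$ for large $|\xi|$ one has $\xi \in \Gamma$ where $\widehat{\varphi u_j}$ is rapidly decreasing. For the second term, $(1-\varphi) u_j$ vanishes on a neighborhood of $\supp \phi_1$; the Schwartz kernel of $\chi(x,D)$ is smooth off the diagonal (repeated integration by parts in $\xi$), so this term is smooth on a neighborhood of $\supp \phi_1$ and hence smooth after multiplication by $\phi_1$. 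Together these give $Pu \in \Vsmooth$. The main obstacle is this last verification: everything else is an essentially direct invocation of Theorem~\ref{micro-parametrix}, but the smoothness of $Pu$ must be checked by hand via the two off-the-shelf Fourier / kernel estimates.
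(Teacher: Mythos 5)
Your proof is correct, but it takes a genuinely different route on both inclusions, so a comparison is in order. For $\WF u \subseteq \bigcap \Char P$, the paper does not invoke the inclusion $\WF(Ru) \subseteq \essup R$; instead it builds an auxiliary microlocalizing operator $\Theta$ (elliptic at $(x_0,\xi_0)$, with $\supp\theta \cap \essup R = \emptyset$), uses $\Theta R \in \Psi^{-\infty}$ to get $\Theta u \in \Vsmooth$ from $u + Ru \in \Vsmooth$, and then reads off the rapid decay of $\widehat{\varphi u}$ from the explicit Fourier representation of $\Theta u$ via Lemma A.1.2 in Shubin. You instead write $u = Q(Pu) - Ru$ and cite the pseudolocality property $\WF(Ru) \subseteq \essup R$ as a componentwise consequence of the scalar theory. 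This is slicker, and it is not circular (the scalar version of $\WF(Ru) \subseteq \essup R$ does not depend on the present proposition, and the componentwise extension is immediate since $\essup R = \bigcup_{j,k}\essup R^{jk}$), but note the paper derives that inclusion only in the subsequent Proposition~\ref{PropWF}, and indeed uses the present proposition to prove part of it; so your ordering of the lemmas would need to be rearranged slightly if one wanted the exposition to match. For $\WF u \supseteq \bigcap \Char P$, the paper constructs $Q = A\circ \Phi_\nu$ (Fourier multiplier $\cdot\mathrm{Id}_\nu$ composed \emph{after} multiplication by $\varphi$), which makes $Q u_k$ directly the oscillatory integral against $\widehat{\varphi u_k}$ and eliminates any need for an off-diagonal estimate: the cutoff is applied to $u$ before the Fourier multiplier, so compact support is automatic. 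You take $P = \chi(x,D)\cdot\mathrm{Id}_\nu$ with $\chi(x,\xi) = \phi_1(x)\kappa(\xi)$ and must therefore split $u = \varphi u + (1-\varphi)u$ and appeal to smoothness of the Schwartz kernel off the diagonal for the second piece. Both are standard and correct; the paper's ``multiply first'' choice buys a one-line verification at the cost of one extra operator composition, while your choice is a perhaps more familiar direct cutoff at the price of the pseudolocality estimate you rightly flag as the main remaining work. One tiny remark: the $Q$ and $R$ produced by Theorem~\ref{micro-parametrix} are already classical, hence properly supported in the paper's conventions, so your parenthetical about replacing $Q$ by a properly supported representative is unnecessary though harmless.
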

\begin{proof}
If $(x_0,\xi_0)\notin \WF u$ then there is a test function $\varphi\in\D (\Omega)$ 
such that $\widehat{\varphi u_k}$ is rapidly decreasing for each $k=1,\dots,\nu$ 
on an open cone containing $\xi_0$. 
The multiplication with $\varphi$ is a differential operator, that we will denote 
by $\Phi$ in the scalar case.
If $v\in\Dp(\Omega, \C^\nu)$ the operator $\Phi_\nu$ acting on $\Dp(\Omega, \C^\nu)$ 
given by $(\Phi_\nu v)_j=\varphi v_j$ (i.e.\ $\Phi_\nu=\Phi\cdot\mathrm{Id}$) is 
also a differential operator of order $0$.

Let $\psi\in\E(S^{n-1})$ such that $\psi\equiv 1$ near $\xi_0/\lvert\xi_0\rvert$ and 
$\mathrm{supp}\,\psi\subset\subset\Gamma\cap S^{n-1}$ and set
\begin{equation*}
a(x,\xi)=\varphi (x)\psi \biggl(\frac{\xi_0}{\lvert\xi_0\rvert}\biggr)
\end{equation*}
and $A\in\Psi_{cl}^0(\Omega)$ with
\begin{equation*}
Aw (x)=\frac{1}{(2\pi)^n}\int\!e^{ix\xi}a(x,\xi)\hat{w}(\xi)\,d\xi
\end{equation*}
for $w\in\D(\Omega)$.

We define $B:=A\cdot \mathrm{Id}\in\Psi_{cl}^0(\Omega,\C^\nu)$ and $Q:=B\circ\Phi_\nu$. 
Now $Q$ acts on $u$ by
\begin{equation*}
(Qu)_k=\frac{1}{(2\pi)^n}\int\!e^{ix\xi}a(x,\xi)\widehat{\varphi u}(\xi)\,d\xi=\frac{1}{(2\pi)^n}\varphi(x)\int\!e^{ix\xi}\psi\biggl(\frac{\xi}{\lvert\xi\rvert}\biggr)\widehat{\varphi u}(\xi)\,d\xi
\end{equation*}
where the integral is for the moment only seen as $\mathfrak{F}^{-1}(a(x,\,.\,\widehat{\varphi u}(\, .\,))$.
 Since by assumption
\begin{equation*}
G_k(\xi)=\psi\biggl(\frac{\xi}{\lvert\xi\rvert}\biggr)\widehat{\varphi u}(\xi)\in\mathcal{S}(\R^n)
\end{equation*}
we have $(Qu)_k\in\D(\Omega)\subseteq\E(\Omega)$. By construction $(x_0,\xi_0)\in\Char Q$.

Now let $P\in\Psi^m_{cl}(\Omega,\C^\nu)$ with $(x_0,\xi_0)\notin\Char P$ and 
$Pu\in\mathcal{E}(\Omega,\C^\nu)$. 
According to Theorem \ref{micro-parametrix} there is an operator 
$R\in\Psi^0_{cl}(\Omega,\C^\nu)$ with $(x_0,\xi_0)\notin \mathrm{essupp} R$ and
\begin{equation*}
u+Ru\in\Vsmooth.
\end{equation*}
 If we choose $\varphi$ and $\psi$ similarly to above and set 
 $\theta(x,\xi)=(\psi(\xi) \sharp\varphi(x))I$ we can assume that 
 $\mathrm{supp}\,\theta\cap\mathrm{essupp}\,R=\emptyset$. 
Therefore $\Theta R\in\Psi^{-\infty}$ and $\Theta u\in\Vsmooth$.  Actually we have  as above 
\begin{equation*}
\bigl(\Theta u\bigr)_k=\frac{1}{(2\pi)^n}\int\! e^{ixy}\psi(\xi)\widehat{\varphi u}(\xi)\,d\xi
\end{equation*}
By Lemma A.1.2 in \cite{Shubin} we have that $(\Theta u)_k\in\mathcal{S} (\R^n)$ and hence 
$\psi\widehat{\varphi u}\in\mathcal{S}(\R^n)$.
It follows promptly that $\widehat{\varphi u}$ has to decrease rapidly
 in a conic neighbourhood of $\xi_0$.
\end{proof}

\begin{proposition}\label{PropWF}
Let $u\in\Dp (\Omega,\C^\nu)$ and $P\in\Psi^m_{cl}(\Omega ,\C^\nu)$. Then
\begin{equation*}
\WF Pu\subseteq \WF u\cap \mathrm{essupp}\, P
\end{equation*}
\end{proposition}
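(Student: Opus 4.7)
The plan is to prove the two inclusions $\WF Pu \subseteq \essup P$ and $\WF Pu \subseteq \WF u$ separately, and then combine them. The first is the more primitive statement; the second will be deduced from it by invoking the microlocal parametrix of Theorem~\ref{micro-parametrix}.

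For $\WF Pu \subseteq \essup P$, I would fix $(x_0,\xi_0) \notin \essup P$ and build a properly supported classical operator $\Theta \in \Psi^0_{cl}(\Omega,\C^\nu)$ of the form $\Theta = (\psi(\xi/|\xi|)\varphi(x)) \cdot \id$, where $\varphi \in \D(\Omega)$ equals $1$ near $x_0$ and $\psi \in \E(S^{n-1})$ equals $1$ near $\xi_0/|\xi_0|$ and is supported in a sufficiently small neighbourhood so that $\essup \Theta \cap \essup P = \emptyset$. Such a $\psi$ exists because $\essup P$ is a closed conic subset of $\CotM$ not containing $(x_0,\xi_0)$. Using the composition estimate $\essup(\Theta P) \subseteq \essup \Theta \cap \essup P$ already recorded in the preparatory material, we get $\Theta P \in \Psi^{-\infty}(\Omega,\C^\nu)$, and hence $\Theta(Pu) \in \Vsmooth$. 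Since $\Theta$ is elliptic at $(x_0,\xi_0)$, Proposition~\ref{WFChar}, applied to the distribution $Pu$, yields $(x_0,\xi_0) \notin \WF Pu$.

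For $\WF Pu \subseteq \WF u$, I would fix $(x_0,\xi_0) \notin \WF u$. By Proposition~\ref{WFChar} there exists a properly supported classical operator $Q$, elliptic at $(x_0,\xi_0)$, with $Qu \in \Vsmooth$. Theorem~\ref{micro-parametrix} then furnishes a parametrix $Q'$ and a remainder $R \in \Psi^0_{cl}(\Omega,\C^\nu)$ satisfying $Q'Q = \id + R$ and $(x_0,\xi_0) \notin \essup R$. Consequently,
\[
Pu = PQ'Qu - PRu.
\]
The first term is smooth because $Qu$ is smooth and $PQ'$ is properly supported. For the second, the inclusion $\essup(PR) \subseteq \essup P \cap \essup R \subseteq \essup R$ shows that $(x_0,\xi_0) \notin \essup(PR)$, so the already-established inclusion $\WF(PRu) \subseteq \essup(PR)$ forces $(x_0,\xi_0) \notin \WF(PRu)$. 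Combining, $(x_0,\xi_0) \notin \WF Pu$.

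The main point, and what I would call the main obstacle, is the bootstrap structure: the deeper inclusion $\WF Pu \subseteq \WF u$ is obtained by feeding the easier inclusion $\WF Pu \subseteq \essup P$ back through the parametrix identity $Q'Q = \id + R$, which splits $Pu$ into a smooth piece and a piece $PRu$ that is already microlocally annihilated at $(x_0,\xi_0)$. The vector-valued framework itself contributes no essential difficulty, since $\WF$ is defined componentwise and the symbolic calculus extends entrywise, exactly as used throughout the preceding section.
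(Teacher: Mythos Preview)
Your proof is correct. The argument for $\WF Pu \subseteq \essup P$ coincides with the paper's. For $\WF Pu \subseteq \WF u$ the paper likewise starts from Proposition~\ref{WFChar} and Theorem~\ref{micro-parametrix} to obtain an operator $Q=\id+R$ with $Qu$ smooth and $(x_0,\xi_0)\notin\essup R$, but then proceeds via the commutator identity $QPu=PQu+[Q,P]u$, observes $[Q,P]=[R,P]$, and introduces a further operator $S$ elliptic at $(x_0,\xi_0)$ with $\essup S\cap\essup R=\emptyset$ to make $SQPu$ smooth and conclude through Proposition~\ref{WFChar}. Your decomposition $Pu=PQ'Qu-PRu$ together with the bootstrap from the already proved inclusion $\WF(PRu)\subseteq\essup(PR)$ is a slightly more economical variant: it sidesteps the commutator and the auxiliary operator $S$, at the price of having to establish the two inclusions in this particular order. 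Both routes are standard and yield the same result.
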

\begin{proof}
Let $(x_0,\xi_0)\notin\WF u$. By Proposition \ref{WFChar} there is a 
classical pseudodifferential operator $Q_1$ with $(x_0,\xi_0)\notin\Char Q_1$ 
such that $Q_1u\in\Vsmooth$. 
Theorem \ref{micro-parametrix} in turn provides an operator $Q_2$ such that
 $Q=Q_2Q_1=\mathrm{Id}+R$ where $R\in\Psi^0_{cl}(\Omega,\C^\nu)$ and 
 $(x_0,\xi_0)\notin\mathrm{essupp}\,R$.
Apparently $Qu\in\E$, hence $PQ\in\E$. 
On the other hand 
\begin{gather*}
QPu=PQu+[Q,P]u\\
\intertext{and}
[Q,P]=(\mathrm{Id}+R)P-P(\mathrm{Id}+R)=RP-PR=[R,P]
\end{gather*}
Thus $(x_0,\xi_0)\notin\mathrm{essupp}\,[Q,P]$.
As in the proof of Proposition \ref{WFChar} we construct a classical pseudodifferential operator $S$ 
that is elliptic at the point $(x_0,\xi_0)$
and satisfies $\mathrm{essupp}\,S\cap\mathrm{essupp}\,R=\emptyset$.
It follows that $\mathrm{essupp}\,S\cap\mathrm{essupp}\,[Q,P]=\emptyset$ and therefore 
$S[Q,P]u\in\E$ and $SQPu\in\mathcal{E}$.
The operator $SQ$ is non-characteristic at $(x_0,\xi_0)$.
Hence $(x_0,\xi_0)\notin\WF Pu$.

Now let $(x_0,\xi_0)\notin\mathrm{essupp}\,P$.
Again we construct an operator $S$ satisfying $(x_0,\xi_0)\notin\Char S$ and 
$\mathrm{essupp}\,S\cap\mathrm{essupp}\,P=\emptyset$.
Thus $SP\in\Psi^{-\infty}$ and $SPu\in\E$. It follows $(x_0,\xi_0)\notin\WF (Pu)$.
\end{proof}
\begin{theorem}\label{thm:vec-prop}
If $P\in\Psi^{m}_{cl}(\Omega ,\C^\nu)$ then we have for all $u\in\Dp(\Omega,\C^\nu)$ that
\begin{equation}
\WF u\subseteq\WF (Pu)\cup \Char P.
\end{equation}
\end{theorem}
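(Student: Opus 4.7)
The plan is to argue by contrapositive: fix a point $(x_0,\xi_0)\in \CotM$ lying outside $\WF(Pu)\cup\Char P$ and show $(x_0,\xi_0)\notin\WF u$. The idea is the standard microlocal parametrix trick applied in the vector-valued setting, which has already been set up in Theorem \ref{micro-parametrix} and Proposition \ref{PropWF}.

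Since $(x_0,\xi_0)\notin \Char P$, the principal symbol $p_m$ is invertible there, so Theorem \ref{micro-parametrix} supplies a classical operator $Q\in\Psi^{-m}_{cl}(\Omega,\C^\nu)$ and an $R\in\Psi^0_{cl}(\Omega,\C^\nu)$ with
\begin{equation*}
QP=\id+R, \qquad (x_0,\xi_0)\notin\essup R.
\end{equation*}
Applying both sides to $u$ yields the decomposition
\begin{equation*}
u = QPu - Ru.
\end{equation*}
I now control the wave front sets of both terms via Proposition \ref{PropWF}. For the first term, $\WF(QPu)\subseteq \WF(Pu)$, and since $(x_0,\xi_0)\notin\WF(Pu)$ by hypothesis, $(x_0,\xi_0)\notin\WF(QPu)$. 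For the second term, $\WF(Ru)\subseteq \essup R$, and since $(x_0,\xi_0)\notin\essup R$ by the parametrix construction, $(x_0,\xi_0)\notin\WF(Ru)$.

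It remains to invoke the elementary fact that $\WF(v+w)\subseteq \WF v\cup\WF w$; this is immediate from the definition, since if $\varphi\in\D(\Omega)$ is a cutoff equal to $1$ near $x_0$ and $\Gamma$ is a common conic neighbourhood of $\xi_0$ on which both $\widehat{\varphi v}$ and $\widehat{\varphi w}$ are rapidly decaying (componentwise), then so is $\widehat{\varphi(v+w)}$. Applied to our decomposition, this gives $(x_0,\xi_0)\notin\WF u$, completing the proof.

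No real obstacle is expected here: all the work was front-loaded into constructing the vector-valued parametrix (Theorem \ref{micro-parametrix}) and establishing the mapping property of pseudodifferential operators on wave front sets (Proposition \ref{PropWF}). The only mild subtlety is making sure the sum estimate on $\WF$ is handled component by component in $\C^\nu$, but this is automatic from the definition $\WF u=\bigcup_{j=1}^\nu\WF u_j$.
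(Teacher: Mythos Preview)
Your proof is correct and essentially identical to the paper's own argument: both use the contrapositive, invoke the microlocal parametrix from Theorem~\ref{micro-parametrix} to write $u=QPu-Ru$ with $(x_0,\xi_0)\notin\essup R$, and then apply Proposition~\ref{PropWF} to each term. The only difference is that you spell out the elementary inclusion $\WF(v+w)\subseteq\WF v\cup\WF w$, which the paper uses implicitly.
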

\begin{proof}
If $(x_0,\xi_0)\notin \WF(Pu)\cup\Char P$ then $P$ is elliptic at the point $(x_0,\xi_0)$. 
By Theorem \ref{micro-parametrix} there are an operator $Q\in\Psi^{-m}_{cl}$ elliptic at $(x_0,\xi_0)$ 
and an operator $R\in\Psi^0_{cl}$ with $(x_0,\xi_0)\notin\mathrm{essupp}\,R$ such that 
$QP=\mathrm{Id}+R$. 
We have $(x_0,\xi_0)\notin\WF(QPu)$ by Proposition \ref{PropWF}. 
On the other hand $QPu=u+Ru$ and $(x_0,\xi_0)\notin\WF (Ru)$ again by Proposition \ref{PropWF}.
Hence $(x_0,\xi_0)\notin \WF u$.
\end{proof}
\section{A Division Theorem}\label{sec:division}
The aim of this section is to study the following question: 
Suppose that $\lambda$ is a smooth function and $u$, say, a locally 
integrable function such that $f=\lambda\cdot u$ is smooth. 
Can we conclude that $u$ itself has to be smooth?
Obviously this is a local problem and the only points of interest are the zeros of $\lambda$, 
since $u=f/\lambda$ must be smooth whenever $\lambda\neq 0$. 

On the other hand the example
 $\lambda (x) =e^{-1/x^2}$ and $u=\lvert x\rvert$  shows that 
 $u$ may have singularities at the points where $\lambda$ is flat, 
and furthermore the example $\lambda (x,y) = x^2 + y^2$ shows that 
the structure of the zero set of $\lambda$ is of importance. 

We are going to only give a simple sufficient condition on $\lambda$
adapted to the applications which we have in mind.  
It remains to study the situtation near zeros of finite order of $\lambda$. 
We begin with the one-dimensional case.

\begin{lemma}\label{lem:division}
Let $\lambda$ be a smooth function near $0\in\R$ such that there is some $k\in\N$ 
with $\lambda^{j}(0)=0$ for $0\leq j<k$ and $\lambda^{(k)}(0)\neq 0$. 
Furthermore let $u$ be locally integrable near $0$ such that $f:=\lambda u$ is smooth near $0$.
Then $u$ is smooth near $0$, too.
\end{lemma}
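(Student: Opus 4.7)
My plan is to reduce to the case $\lambda(x)=x^k$ and then exploit the local integrability of $u$ to force the Taylor coefficients of $f$ up to order $k-1$ to vanish.

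First I would apply Hadamard's lemma (iterated Taylor division) to write
\[
\lambda(x) = x^k \mu(x),
\]
with $\mu$ smooth near $0$ and $\mu(0)=\lambda^{(k)}(0)/k!\neq 0$. On a small enough neighbourhood of $0$ the function $\mu$ is nowhere zero, so $1/\mu$ is smooth there. Since $\lambda u = f$, the function $x^k u(x) = f(x)/\mu(x)$ is smooth near $0$, so replacing $f$ by $f/\mu$ I may assume from the start that $\lambda(x)=x^k$.

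Next I would write the Taylor expansion of $f$ at $0$ with smooth remainder: there exists a smooth function $g$ such that
\[
f(x) = \sum_{j=0}^{k-1} \frac{f^{(j)}(0)}{j!}\, x^j + x^k g(x).
\]
Off of the origin one has $u(x) = f(x)/x^k$ almost everywhere (since $x^k u(x)=f(x)$ and $x^k\neq 0$ away from $0$), and therefore
\[
u(x) = \sum_{j=0}^{k-1} \frac{f^{(j)}(0)}{j!}\, x^{j-k} + g(x)
\]
for $x$ in a punctured neighbourhood of $0$. The smooth term $g$ is locally integrable; by hypothesis $u$ is too, so the singular sum must also be locally integrable near $0$. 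But $x^{j-k}$ fails to be locally integrable at $0$ for every $0\leq j\leq k-1$, and the exponents $j-k$ are distinct, so by linear independence of these singularities the only way the sum can be locally integrable is that every coefficient $f^{(j)}(0)$ with $0\leq j\leq k-1$ vanishes.

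With those coefficients gone we are left with $u(x)=g(x)$ almost everywhere on a punctured neighbourhood of $0$, hence on a full neighbourhood of $0$ as equality of distributions, and $g$ is smooth. The only genuine step that could use a comment is the linear-independence argument used to conclude that no cancellation among the $x^{j-k}$ can rescue local integrability; this is clear since, for instance, the singular terms have different homogeneity and one can isolate the leading one by multiplying by $|x|^{k-1-\varepsilon}$ (or simply by considering the asymptotics of $u$ restricted to $(0,\delta)$ and to $(-\delta,0)$). Nothing else in the argument poses a real obstacle.
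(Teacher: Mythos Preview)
Your proof is correct and follows the same strategy as the paper's: factor $\lambda(x)=x^k\tilde\lambda(x)$ with $\tilde\lambda(0)\neq 0$, then use local integrability of $u$ to force the first $k$ Taylor coefficients of $f$ at $0$ to vanish, so that $u=\tilde f/\tilde\lambda$ is smooth. The only cosmetic difference is that the paper peels off one factor of $x$ from $f$ at a time (showing $f_1(0)=0$ from $u\sim x^{1-k}$, then iterating), whereas you expand $f$ to order $k$ in one shot and kill all the coefficients by a leading-asymptotic argument; these are the same idea organized slightly differently.
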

\begin{proof}
First, we note that the zero of $\lambda $ at $0$ is isolated. 
By the Fundamental Theorem of Calculus we obtain easily the existence of a smooth function 
$\tilde{\lambda}$ with $\tilde{\lambda}(0)\neq0$ such that 
\begin{equation*}
\lambda(x)=x^k\tilde{\lambda}(x)
\end{equation*}
near the origin.

In order to proceed we need a similar decomposition for $f$.
But, since we do not know the values of the derivatives of $f$  at the origin a-priori,
the Fundamental Theorem of Calculus only says that there is a smooth function $f_1$ 
such that $f=xf_1$. 
If $k>1$ then in a punctured neighbourhood  of $0$ we have
\begin{equation*}
u(x)=x^{1-k}\frac{f_1(x)}{\tilde{\lambda}(x)}
\end{equation*}
and if $f_1(0)\neq 0$ then $u(x)\sim x^{1-k}$ for $x\rightarrow 0$.
This is a contradiction to $u$ being locally integrable.
Therefore $f_1(0)=0$ and there is a smooth function $f_2$ near the origin such that $f(x)=x^2f_2(x)$.

Iterating this argument if necessary we obtain that there is a smooth function $f_k$ near $0$ 
such that $f(x)=x^kf_k(x)$.
Hence we obtain in some punctured neighbourhood of $0$ the following representation of $u$
\begin{equation*}
u(x)=\frac{f_k(x)}{\tilde{\lambda}(x)},
\end{equation*}
where the right-hand side of this equation can be extended smoothly to the origin.
\end{proof}

One cannot expect that the analogous result to \autoref{lem:division} holds in several variables 
(c.f.\ \cite{Boman}). 
However, one can adapt the proof of \autoref{lem:division} to show a partial result 
for smooth functions whose zero set satisfies additionally certain geometric conditions.

\begin{proposition}\label{prop:division}
Let $p_0\in\R^n$ and $\lambda$ a smooth function defined near $p_0$.
Suppose that $\lambda^{-1}(0)$ is a real hypersurface in $\R^n$ near $p_0 \in \lambda^{-1}(0)$ 
and that there are $v\in\R^n$ and $k\in\N$ such that $\partial_{v}^j\lambda (p) =0$, for $j<k$ and
$p\in\lambda^{-1} (0)$ close by $p_0$, and
$\partial^k_v\lambda (p_0)\neq 0$.

If $u$ is a locally integrable function near $p_0$ with the property that $f=\lambda\cdot u$ is smooth, 
then $u$ has to be smooth near $p_0$.
\end{proposition}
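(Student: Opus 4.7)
The first observation is that the direction $v$ must be transverse to the hypersurface $H:=\lambda^{-1}(0)$ at $p_0$. Indeed, if $v$ were tangent to $H$ at $p_0$, then the derivatives $\partial_v^j\lambda$ along $H$ could all be computed from $\lambda|_H=0$, forcing $\partial_v^k\lambda(p_0)=0$. So the orbit of the constant vector field $\partial_v$ through any point of $H$ near $p_0$ is transverse to $H$, and we can introduce local coordinates $(h,t)\in H\times(-\varepsilon,\varepsilon)$ near $p_0$, adapted to the product structure, so that $H=\{t=0\}$ and $\partial_v=\partial_t$.

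In these coordinates, the hypothesis $\partial_t^j\lambda(h,0)=0$ for $h$ near $p_0$ and $j<k$, together with Taylor's theorem with integral remainder in the $t$ variable, gives
\[
\lambda(h,t)=t^k\tilde\lambda(h,t),\qquad
\tilde\lambda(h,t)=\int_0^1\frac{(1-\sigma)^{k-1}}{(k-1)!}\partial_t^k\lambda(h,\sigma t)\,d\sigma,
\]
with $\tilde\lambda$ smooth and $\tilde\lambda(p_0)=\frac{1}{k!}\partial_t^k\lambda(p_0)\neq 0$. Shrinking the neighborhood, $\tilde\lambda$ is nowhere zero, so $g:=f/\tilde\lambda$ is a smooth function satisfying $g(h,t)=t^k u(h,t)$ almost everywhere.

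The heart of the argument is then the following iterated division: if $g$ is smooth, $u$ is locally integrable, and $g(h,t)=t\cdot u(h,t)$ a.e., then $g$ vanishes along $\{t=0\}$. For, if $g(h_*,0)\neq 0$ for some $h_*$, by continuity $|g|\geq c>0$ on some product neighborhood $U\times(-\delta,\delta)$ of $(h_*,0)$, whence $|u(h,t)|\geq c/|t|$ on this set, contradicting $u\in L^1_{\mathrm{loc}}$ by Fubini. Granting this, Hadamard's lemma in the $t$ variable yields a smooth $g_1$ with $g=t\,g_1$, and from $t(g_1-u)=0$ a.e.\ we conclude $g_1=u$ almost everywhere, so in particular $g_1$ is again locally integrable and $g_1=t^{k-1}u$.

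Applying this step $k$ times gives a smooth function $g_k$ with $g=t^k g_k$ and $u=g_k$ almost everywhere, which is the desired conclusion. The main delicate point is step three, the passage from the vanishing hypotheses along the hypersurface to the product factorization $\lambda=t^k\tilde\lambda$; this is precisely where the geometric hypothesis on $\lambda^{-1}(0)$ being a hypersurface transverse to $v$ is used, and where the argument would fail for zero sets of higher codimension (as the counterexample $\lambda(x,y)=x^2+y^2$ alluded to in the paragraph preceding the statement shows).
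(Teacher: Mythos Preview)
Your proof is correct and follows essentially the same route as the paper: straighten the hypersurface to $\{t=0\}$, factor $\lambda=t^k\tilde\lambda$ via Taylor's formula with the nonvanishing hypothesis on the $k$-th normal derivative, and then iteratively peel off factors of $t$ from $f$ (you do this on $g=f/\tilde\lambda$, the paper on $f$ itself), using local integrability of $u$ at each step to force the quotient to vanish on $\{t=0\}$. You add an explicit justification that $v$ must be transverse to $H$ at $p_0$, which the paper leaves implicit in its choice of coordinates; note that your phrasing there is slightly imprecise---the clean argument is that $\partial_v^{k-1}\lambda$ vanishes on $H$ by hypothesis, so if $v\in T_{p_0}H$ then differentiating along a curve in $H$ tangent to $v$ gives $\partial_v^k\lambda(p_0)=0$---and in your iteration step the symbol $u$ is used both for the sublemma's placeholder and for the fixed function, so ``$t(g_1-u)=0$'' should read ``$t(g_1-t^{k-1}u)=0$'', consistent with the conclusion $g_1=t^{k-1}u$ you correctly draw.
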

\begin{proof}
We can choose coordinates $(x_1,\dots, x_n)=(x^{\prime},x_n)$ in a neighbourhood $V$ of $p_0$ 
such that $p_0=0$ in these coordinates, $\lambda^{-1}(0)\cap V=\{(x^\prime ,x_n)\in V\mid x_n=0\}$
 and for $(x^\prime,0)$ we have
\begin{align*}
\frac{\partial^j  \lambda}{\partial x_n^j}(x^{\prime},0) &=0, & j&<k,\\
\intertext{and}
\frac{\partial^k \lambda}{\partial x_n^k}(x^\prime ,0) &\neq 0.
\end{align*}
As in the proof of \autoref{lem:division} we conclude, if we shrink $V$, that there is a smooth function 
$\tilde{\lambda}$ on $V$ with $\tilde{\lambda}(x)\neq 0$ for $x\in V$ 
such that $\lambda (x^\prime ,x_n)=x_n^k\tilde{\lambda}(x^\prime,x_n)$. 
There is also a smooth function $f_1$ on $V$ such that $f(x^\prime,x_n)=x_nf_1(x^\prime,x_n)$.
We want to show as in the one-dimensional case that $f_1(x^\prime, 0)=0$ 
for $(x^\prime,0)\in V$ if $k>1$:
Suppose that there exists some $y\in\R^{n-1}$ with $(y,0)\in V$ and $f_1(y,0)\neq 0$.
 Then there is a neighbourhood $W$ of $(y,0)$ such that $f_1(x)\neq 0$ 
 and also $\tilde{\lambda}(x)\neq 0$ for $x\in W$.
W.l.o.g.\ the open set $W$ is of the form $W=W^{\prime}\times I\subset \R^{n-1}\times \R$ and
\begin{equation*}
F(x_n):=\int\limits_{W^\prime}\biggl\lvert\frac{f_1}{\tilde{\lambda}}(x^\prime, x_n)\biggr
\rvert\,dx^\prime>0
\end{equation*}
for $x_n\in I$. We conclude that
\begin{equation*}
\int\limits_W\lvert u(x)\rvert\,dx=\int\limits_I \lvert x_n\rvert ^{1-k}F(x_n)\,dx_n =\infty
\end{equation*}
and hence $u$ is not locally integrable near $(y,0)$ which contradicts our assumption.

Therefore  we obtain by iteration a smooth function $\tilde{f}$ defined near the origin in $\R^n$ such that  
$f(x^\prime,x_n)=x_n^k\tilde{f}(x^\prime,x_n)$.
Hence $u=\tilde{f}/\tilde{\lambda}$ is also smooth near the origin.
\end{proof}

\section{Proof of Theorem~1} 
\label{sec:proof_of_}

Let $X$ be an infinitesimal CR diffeomorphism as in the 
statement of the theorem. The assertion of the theorem can be checked
locally, so we restrict ourselves to an open set $U\subset M$ on 
which we are given a basis $L_1, \dots, L_n$ of CR vector fields, a 
basis $\omega^1 , \dots , \omega^N$ of holomorphic forms, and a 
generating set $\theta^1 , \dots , \theta^d $ of characteristic forms.
We also assume that $X$ extends microlocally to a wedge with edge $M$.

Since $\mathcal{L}_{L_k}$ maps holomorphic forms to 
holomorphic forms, we can write 
\[ d\omega^j (L_k, \cdot) = \sum_{\ell=1}^N B^j_{k,\ell} \omega^\ell (\,.\, ) \]
for functions $B^j_{k,\ell}$ which are smooth on $U$. 
By \autoref{lem:holo} we can regard $X$  as a holomorphic vector field and write 
\[ X = \sum_{j=1}^N X_j (\omega^j)^*. \]
The assumption that $X$ is an infinitesimal CR diffeomorphism implies that
\begin{gather} 
L_k X_j =  L_k (\omega^j (X)) = d\omega^j(L_k, X) = 
\sum_{\ell=1}^N B^j_{k,\ell} X_\ell  \label{e:CR1}\\
\intertext{and}
\theta (X)=\overline{\theta(X)}\label{e:inf1}
\end{gather}
for any characteristic form $\theta$.

The proof of \autoref{thm:multipliers} follows now from the next statement.
\begin{proposition}\label{prop:main}
Let $(M,\crb)$ be an abstract, smooth CR structure and let $X$ be a distributional 
CR holomorphic vector field; that is a distributional section 
of $(T^\prime M)^*$ that satisfies \eqref{e:CR1}. 
Assume further that $X$ fulfills \eqref{e:inf1} for any characteristic form and extends microlocally 
to a wedge with edge $M$.
Then for any $\lambda\in\Gamma(M,\mathcal{S}^k)$ the section $\lambda X$ is smooth.
\end{proposition}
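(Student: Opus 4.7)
The plan is to prove the proposition by induction on $k$, showing $\omega(X)\in\E(U)$ for every $\omega\in E_k$, and then to pass from smoothness of the scalars $\omega(X)$ to smoothness of the vector-valued section $\lambda X$ by a Cramer/adjugate argument. I work locally on an open $U\subset M$ equipped with bases $L_1,\dots,L_n$ of $\crb$, $\theta^1,\dots,\theta^d$ of $T^0M$, and $\omega^1,\dots,\omega^N$ of $T^\prime M$, writing $\mathcal{L}_j=\mathcal{L}_{L_j}$ and $\mathcal{L}^\alpha=\mathcal{L}_1^{\alpha_1}\cdots\mathcal{L}_n^{\alpha_n}$.

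The base case $k=0$ is the only step that uses the microlocal hypothesis, and is the main technical point: for any characteristic form $\theta$ the scalar distribution $u:=\theta(X)$ is real by \eqref{e:inf1} and satisfies $\WF u\subset \Gamma^0$. Reality forces $\WF u$ to be symmetric under $\xi\mapsto-\xi$, while $\Gamma^0$ is a proper cone (being the polar of a closed convex cone of nonempty fiber interior implicit in the wedge definition), so $\WF u=\emptyset$ and $u\in\E(U)$. For the inductive step, assume $\omega(X)\in\E(U)$ for every $\omega\in E_{k-1}$. The distributional CR condition of \autoref{def:infcrdiffeo} gives, for any $L\in\Gamma(U,\crb)$ and $\omega\in E_{k-1}$,
\[ (\mathcal{L}_L\omega)(X)=L\bigl(\omega(X)\bigr), \]
whose right-hand side is the action of the smooth vector field $L$ on the smooth function $\omega(X)$ and is therefore smooth. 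Since the forms $\mathcal{L}_L\omega$ with $L\in\Gamma(U,\crb)$ and $\omega\in E_{k-1}$ generate $E_k$ as an $\E(M)$-module, and since the pairing $\{X,\cdot\}$ is $\E$-linear in its second slot, $\omega(X)\in\E(U)$ for every $\omega\in E_k$.

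To conclude, recall that $\mathcal{S}^k|_U$ is generated by the determinants $D(\underline\alpha,r)$ from \eqref{equ:basisfunctions}. Fix such $\underline\alpha,r$, set $V^i=\mathcal{L}^{\alpha^i}\theta^{r_i}\in E_k$, and use the expansion \eqref{e:exptheta}. Writing $X=\sum_j X_j(\omega^j)^\ast$ produces the $N\times N$ linear system $V^i(X)=\sum_\ell A^{\alpha^i,r_i}_\ell X_\ell$ whose coefficient matrix has determinant $D(\underline\alpha,r)$, so Cramer's rule yields
\[ D(\underline\alpha,r)\cdot X_j=\sum_{i=1}^N \bigl(\mathrm{adj}\,A\bigr)_{ji}\cdot V^i(X). \]
Each $V^i(X)$ is smooth by the induction and $\mathrm{adj}\,A$ has smooth entries, so $D(\underline\alpha,r)\cdot X$ is a smooth section of $(T^\prime M)^\ast$; by $\E$-linearity of the ideal $\mathcal{S}^k$, $\lambda X$ is smooth for every $\lambda\in\Gamma(U,\mathcal{S}^k)$. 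The only obstacle lies in the base case, where the combination of reality and the wedge hypothesis is essential; all subsequent steps are purely algebraic.
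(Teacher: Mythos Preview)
Your argument is correct and takes a genuinely different route from the paper's. Both proofs end with the same Cramer/adjugate step, but the microlocal input is organized very differently.

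The paper first invokes the vector-valued elliptic regularity theorem (\autoref{thm:vec-prop}) for the system $(P-K)\tilde X=0$ to place $\WF X_j\subset T^0M$, then differentiates the reality relation $\theta(X)=\overline{\theta(X)}$ by $\mathcal{L}^\alpha$ to express $\sum_\ell A^{\alpha,j}_\ell X_\ell$ as a smooth combination of $L^\beta\bar X_\ell$; only after multiplying by the adjugate does the paper conclude that $D(\underline\alpha,r)X_j$ ``extends above and below'' and hence has empty wavefront set. Your approach instead isolates the microlocal step at the very beginning: since $\theta(X)$ is \emph{real} and already has $\WF\subset\Gamma^0$ by the wedge hypothesis, symmetry of the wavefront set under $\xi\mapsto-\xi$ forces $\WF\theta(X)=\emptyset$ immediately. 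After that the argument is purely algebraic: the distributional CR identity $(\mathcal L_L\omega)(X)=L(\omega(X))$ propagates smoothness from $E_{k-1}$ to $E_k$ by applying a smooth vector field to a smooth function.

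What your approach buys is considerable economy: the entire machinery of \autoref{sec:micro} (matrix pseudodifferential operators, \autoref{thm:vec-prop}) is never used, and as a bonus you obtain the first assertion of \autoref{thm:multipliers} --- that $\omega(X)$ is smooth for every $\omega\in E$ --- directly along the way, whereas the paper's proof of \autoref{prop:main} yields only the multiplier statement. The paper's organization, on the other hand, keeps the wavefront-set reasoning uniform for all the $X_j$ and defers the reality condition to a single reflection step.

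One small caveat: your parenthetical that $\Gamma^0$ is ``the polar of a closed convex cone of nonempty fiber interior'' does not match the paper's definition, which simply takes $\Gamma^0$ to be a closed convex cone in $T^0M\setminus\{0\}$. What both your proof and the paper's proof actually need is that $\Gamma^0\cap(-\Gamma^0)=\emptyset$, i.e.\ that the cone is proper; the paper assumes this implicitly when concluding that ``extends above and below'' implies empty wavefront set, so you are in no worse shape, but the justification you give is not quite the right one.
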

\begin{proof}
Similarly to above we can write locally on $U\subset M$
\begin{equation*}
X=\sum_{j=1}^NX_j (\omega^j)^*
\end{equation*}
where now $X_j\in\D^{\prime}(U)$.

If we consider the vector
\[ \tilde X = ( \underbrace{X_1, \dots, X_1}_{{n} \text{ times}} , \dots, \underbrace{X_N, \dots, X_N}_{{n} \text{ times}} ),\]
then \eqref{e:CR1} implies that $\tilde X$ satisfies the equation
$P \tilde X = K \tilde X$, 
where
\[ 
P = \begin{pmatrix}
 \begin{matrix}
 	L_1 & 0 & \dots & 0 \\
 	0 & L_2 &  \dots & 0 \\
 	\vdots & \vdots & \ddots &  \\
 	0 & 0 & \dots & L_n
 \end{matrix} & \dots & 0 \\
 \vdots & \ddots &  \\
 0 & & \begin{matrix}
 	L_1 & 0 & \dots & 0 \\
 	0 & L_2 &  \dots & 0 \\
 	\vdots & \vdots & \ddots & 0 \\
 	0 & 0 & \dots & L_n
 \end{matrix}
\end{pmatrix}
\]
and 
\[ K = \begin{pmatrix}
	B^1_{1,1} & 0 & \dots & B^1_{1,2} & 0 & \dots & \dots & B^1_{1,N} & \dots & 0 \\
	B^1_{2,1} & 0 & \dots & B^1_{2,2} & 0 & \dots & \dots & B^1_{2,N} & \dots & 0 \\
	\vdots & \vdots &  & \vdots & \vdots &  &  & \vdots &  & \vdots \\
	B^1_{n,1} & 0 & \dots & B^1_{n,2} & 0 & \dots & \dots & B^1_{n,N} & \dots & 0 \\
\vdots & \vdots &  & \vdots & \vdots &  &  & \vdots &  & \vdots \\
B^N_{1,1} & 0 & \dots & B^N_{1,2} & 0 & \dots & \dots & B^N_{1,N} & \dots & 0 \\
	B^N_{2,1} & 0 & \dots & B^N_{2,2} & 0 & \dots & \dots & B^N_{2,N} & \dots & 0 \\
	\vdots & \vdots &  & \vdots & \vdots &  &  & \vdots &  & \vdots \\
	B^N_{n,1} & 0 & \dots & B^N_{n,2} & 0 & \dots & \dots & B^N_{n,N} & \dots & 0 \\
\end{pmatrix}. \]
We can thus apply \autoref{thm:vec-prop} in order
to see that the components $X_j$ of $X$ have their wavefront sets
contained in the characteristic directions of the $L_k$, which 
means, they are restricted to $T^0 M$. By assumption, we know 
that there exists a closed convex cone $\Gamma^0  \subset T^0 M \setminus \{0\}$ such that 
$ \WF (X_j) \subset\Gamma^0 $ for every $j=1,\dots,N$. If we denote
by $W^+ = (\Gamma^0)^c \subset T^0 M $  then we have
$\WF (X_j) \cap W^+  =\emptyset $ for all $j = 1, \dots ,N$. 
For simplicity, we shall say that $X_j$ {\em extends above}; 
similarly, with $(-\Gamma^0)^c = W^-$, we have that $\WF (\bar X_j) \cap W^- = \emptyset$, and say that $\bar X_j$ 
\emph{extends below}. 
Obviously the same is true for any derivative of the $X_j$
or $\bar X_j$, respectively. 

By \eqref{e:inf1} we know that
$\theta (X) = \overline{ \theta(X)}$ for every characteristic form $\theta$. Recall from \eqref{e:exptheta} that we write 
\[ \mathcal{L}^\alpha \theta^j = \sum_{\ell=1}^{N} A^{\alpha,j}_\ell \omega^\ell; \]
hence, in coordinates, the equation $\theta(X) = \overline{\theta(X)}$ becomes  
\begin{equation}
	\label{e:refl1} 
 \sum_{\ell=1}^N A^{0,j}_\ell X_\ell = 
\sum_{\ell=1}^N \overline{A^{0,j}_\ell} \bar{X}_\ell, 
\end{equation}
and the left hand side of that equation extends below, 
while the right hand side extends above. Choose any $N$-tuple 
$\underline{\alpha} = (\alpha^1 , \dots , \alpha^N )\in \N_0^{Nn}$, 
with $|\alpha^j|\leq k $ for $j=1,\dots, N$, and
$r= (r_1, \dots ,r_N)\in \{1,\dots,d\}^N$. Applying $\mathcal{L}^\alpha$ to $\theta(X) = \overline{\theta(X)}$ (i.e. to \eqref{e:refl1}) gives
\[ \sum_{\ell = 1}^N A^{\alpha,j}_\ell X_\ell = \sum_{\substack{|\beta|\leq |\alpha| \\ \ell=1,\dots, N}}
C^{\alpha, \ell}_\beta L^\beta \bar{X}_\ell,
 \]
where $C^\ell_\beta$ are smooth functions on $U$. We thus have, for 
the above choice of $\underline{\alpha}$ and $r$, the following 
system of equations:
\[ 
\begin{pmatrix}
	A^{\alpha^1,r_1}_1 & \dots & A^{\alpha^1,r_1}_N \\
	\vdots &\ddots & \vdots\\
	A^{\alpha^N,r_N}_1 & \dots & A^{\alpha^N,r_N}_N 
\end{pmatrix}
\begin{pmatrix}
	X_1 \\ \vdots \\ X_N
\end{pmatrix} = 
\begin{pmatrix}
	\sum
C^{\alpha^1, \ell}_\beta L^\beta \bar{X}_\ell \\ 
\vdots \\
\sum
C^{\alpha^N, \ell}_\beta L^\beta \bar{X}_\ell
\end{pmatrix}.
\]
We multiply this system by the classical adjoint of the matrix 
\[\begin{pmatrix}
	A^{\alpha^1,r_1}_1 & \dots & A^{\alpha^1,r_1}_N \\
	\vdots &\ddots & \vdots\\
	A^{\alpha^N,r_N}_1 & \dots & A^{\alpha^N,r_N}_N 
\end{pmatrix}\]
and obtain for each 
$j=1,\dots, N$ that
\[D(\underline{\alpha},r)  X_j = \sum_{\substack{|\beta|\leq k \\ \ell=1,\dots, N}} D^{\underline{\alpha},r}_{\beta,j } L^\beta \bar X_j, \]
where the $D^{\underline{\alpha},r}_{\beta,j }$ are smooth functions 
on $U$; the right hand side of this equation therefore extends below. 

Hence, for any $\underline{\alpha}$ and $r$ with $|\alpha^j| \leq k$, we have that
$D(\underline{\alpha},r)  X_j$ extends above {\em and} below; in particular, we have that $\WF (X_j) =\emptyset$ so 
that we can conclude that  $D(\underline{\alpha},r)  X_j$ is actually smooth.  Since any 
$\lambda$ in the statement of \autoref{prop:main} can, over $U$, be written as
a smooth linear combination of $D(\underline{\alpha},r) $ with
$|\alpha^j| \leq k$, the proof is finished.
\end{proof}

\section{Proof of the further statements in \autoref{sec:introduction_and_statement_of_results}}
\label{sec:proof_of_theorem_2}
In this section we give the proofs of \autoref{thm:main} and \autoref{cor:main2a}.
The statement of \autoref{thm:main} follows immediately from 
\autoref{thm:multipliers} and \autoref{prop:division}:
By assumption there is a multiplier $\lambda\in\mathcal{S}$ near $p$ 
whose zero set is a real hypersurface near $p$ and $\lambda$ is not flat at $p$. 
\autoref{thm:multipliers} implies that $\lambda X$ is smooth near $p$ and 
since $X$ is assumed to be locally integrable we can apply \autoref{prop:division} to conclude 
that $X$ has also to be smooth near $p$.

In order to prove \autoref{cor:main2a} we now have to show that 
a weakly $k$-nondegenerate real hypersurface $M$ is CR-regular, 
i.e.\ on $M$ there is a multiplier $\lambda$ that can be written in suitable local coordinates as
\begin{equation*}
\lambda(z,\bar{z},s)=s^\ell\psi(z,\bar{z},s)
\end{equation*}
with $\psi$ being a smooth function that does not vanish for $s=0$ and $\ell\in\N$.

By assumption we have that there are coordinates $(z,w)\in\C^n\times\C$
 such that $M$ is given locally by
\begin{equation*}
\imag w=(\real w)^m\varphi (z,\bar{z},\real w)
\end{equation*}
where $m\!\in\!\N$ and $\varphi$ is a smooth real-valued function defined near $0$
with the property that $\varphi_{z^\alpha}(0)\!=\!\varphi_{\bar{z}^\alpha}(0)\!=\!0$ 
for $\lvert\alpha\rvert\leq k$ and
\begin{equation*}
\spanc_\C \{ \varphi_{z{\bar z}^\alpha}  (0,0,0) \colon 0<\lvert\alpha\rvert\leq k\}
= \Cn.
\end{equation*}

A local basis of the CR vector fields on $M$ is given by
\begin{equation*}
L_j=\frac{\partial}{\partial \bar{z}_j}-i\frac{s^m\varphi_{\bar{z}_j}}{1+i(s^m\varphi)_s}\frac{\partial}{\partial s},\qquad 1\leq j\leq n.
\end{equation*}

The characteristic bundle is spanned near the origin by 
\begin{equation*}
\theta =-ds-i\frac{s^m}{1+i(s^m\varphi)_s}\sum_{j=1}^n \varphi_{\bar{z}_j}\,d\bar{z}_j
+i\frac{s^m}{1-i(s^m\varphi)_s}\sum_{j=1}^n \varphi_{z_j}\,dz_j
\end{equation*}
and $\theta$ together with the forms $\omega^j=dz_j$ constitute a local basis of $T^\prime M$.

For simplicity we set
\begin{equation*}
b^j=-i\frac{s^m}{1+i(s^m\varphi)_s}\varphi_{\bar{z}_j}
\end{equation*}
for $1\leq j\leq n$.
If we consider a general holomorphic form 
\begin{equation*}
\eta=\sigma\,\theta + \sum_{j=1}^n\rho_j\omega^j=-\sigma\,ds+\sum_{j=1}^n \sigma b^j
\,d\bar{z}_j+\sum_{j=1}^n \bigl(\sigma \bar{b}_j+\rho^j\bigr)\,dz_j
\end{equation*}
with $\sigma,\rho^j$ being smooth functions on $M$,
then we obtain
\begin{equation}\label{generalderivative}
d\eta(L_\ell,\, .\,)=\bigl((\sigma b^\ell)_s +\sigma_{\bar{z}_\ell}\bigr)\,\theta 
+\sum_{j=1}^n \bigl(L_\ell\rho^j +\sigma (L_\ell \bar{b}^j-\bar{L}_j b^\ell)\bigr)\,\omega^j.
\end{equation}

For a multi-index $\alpha\in\N_0^n$ with length $\lvert\alpha\rvert = r$ we define the following sequence of multi-indices
\begin{align*}
\alpha(1)&=e_1\\
\alpha(2)&=2e_1\\
&\;\;\vdots\\
\alpha(\alpha_1)&=\alpha_1 e_1\\
\alpha(\alpha_1+1)&=\alpha_1 e_1 +e_{2}\\
&\;\;\vdots\\
\alpha(\alpha_1+\alpha_{2})&=\alpha_1e_1+\alpha_2e_2\\
&\;\;\vdots\\
\alpha(\lvert\alpha\rvert)&=\alpha.
\end{align*}
Furthermore let $L_j^t=-\partial_{\bar{z}_j}-b^j\partial_s-b^j_s$ be the formal adjoint of $L_j$ (c.f.\ \cite{Ho}) and $b^{e_j}:=b^j$, $j=1,\dotsc,n$.

Iterative application of \eqref{generalderivative} leads to 
\begin{equation*}
\begin{split}
\mathcal{L}^\alpha\theta &= \bigl((-L^t)^\alpha 1\bigr)\,\theta 
+\sum_{\ell=1}^n \sum_{\nu=1}^{\vert\alpha\rvert} L^{\alpha(\nu)}
\Bigl(\bigl((-L^t)^{\alpha-\alpha(\nu)}1\bigr)
\bigl(L^{\alpha(\nu)-\alpha(\nu-1)}\bar{b}^\ell-\bar{L}_\ell b^{\alpha(\nu)-\alpha(\nu-1)}\bigr)\Bigr)\,\omega^\ell\\
&=A_\theta^\alpha\,\theta+\sum_{\ell=1}^n A^\alpha_\ell\,\omega^\ell.
\end{split}
\end{equation*}
We claim that $A^{\alpha}_\ell=s^m B^{\alpha}_\ell$ 
where $B^{\alpha}_\ell$ are some smooth functions  and 
$B^{\alpha}_\nu(0)=2i\varphi_{\bar{z}^{\alpha}z_\nu}(0)$ for 
$\lvert\alpha\rvert\leq k$.

First, we observe that for $1\leq j,\ell\leq n$  
\begin{equation*}
\begin{split}
L_j\bar{b}^\ell-\bar{L}_\ell b^j&=s^m\Biggl(\frac{i\varphi_{\bar{z}_j z_\ell}
(1+i(s^m\varphi))_s + \varphi_{z_\ell}(s^m\varphi_{\bar{z}_j})_s}{(1+i(s^m\varphi)_s)^2}
+\frac{\varphi_{\bar{z}_j}\bigl((s^m\varphi_{z_\ell})_s(1+i(s^m\varphi)_s)
-is^m\varphi_{z_\ell}(s^m\varphi)_{ss}\bigr)}{(1+i(s^m\varphi)_s)^3}\\
&\quad\;\; +\frac{i\varphi_{\bar{z}_j z_\ell}
(1+i(s^m\varphi))_s + \varphi_{\bar{z}_j}(s^m\varphi_{z_\ell})_s}{(1+i(s^m\varphi)_s)^2}
-\frac{\varphi_{z_\ell}\bigl((s^m\varphi_{\bar{z}_j})_s(1+i(s^m\varphi)_s)
-s^m\varphi_{\bar{z}_j}(s^m\varphi)_{ss}\bigr)}{(1+i(s^m\varphi)_s)^3}\Biggr)\\
&=s^m\lambda^j_\ell
\end{split}
\end{equation*}
and 
$\lambda^j_\ell(0)=2i\varphi_{\bar{z}_jz_\ell}(0)$
by the assumptions on $\varphi$. Furthermore we remark that 
\begin{equation*}
(-L^t)^\beta 1=(-L^t)^{\beta (\lvert\beta\rvert-1)}b^r_s
\end{equation*}
where $r\in\{1,\dotsc,n\}$ is the greatest integer such that $\beta_r\neq 0$.

If we also recall the two simple facts for smooth functions $f,g$: $(s^qf)_s=s^{q-1}f+s^qf_s$ for $q\geq 2$ whereas $(sg)_s=g+sg_s$ we see the following:
If $m\geq 2$ we have
\begin{equation*}
A^{\alpha}_\ell (z,\bar{z},s)=s^m 
\frac{2i\varphi_{\bar{z}^\alpha z_\ell}(z,\bar{z},s)}{1+(s^m\varphi(z,\bar{z},s))_s^2}
+s^{2m-1}R^\alpha_\ell(z,\bar{z},s)=s^m B^{\alpha}_\ell(z,\bar{z},s).
\end{equation*}
On the other hand we obtain for $m=1$ the following representation
\begin{equation*}
A^{\alpha}_\ell(z,\bar{z},s)
=s\frac{2i\varphi_{\bar{z}^\alpha z_\ell}(z,\bar{z},s)}{1+(\varphi(z,\bar{z},s)+s\varphi_s(z,\bar{z},s))^2}
+sS^{\alpha}_\ell(z,\bar{z},s)+s^2T^{\alpha}_\ell(z,\bar{z},s)=sB^{\alpha}_\ell(z,\bar{z},s),
\end{equation*}
where $S^{\alpha}_\ell$ is a sum of products of rational functions 
with respect to $\varphi$ and its derivatives.
Each of these summands contains at least one factor of the form 
$\varphi_{\bar{z}^\beta}$ or $\varphi_{z^\beta}$
with $\lvert\beta\rvert\leq\lvert\alpha\rvert\leq k$ 
and therefore $S^{\alpha}_\ell(0)=0$. 
The claim follows.

By assumption there are multi-indices $\alpha^1,\dots,\alpha^n\neq 0$ of length $\leq k$ such that 
\begin{equation*}
\{\varphi_{z\bar{z}^{\alpha^1}}(0),\dots ,\varphi_{z\bar{z}^{\alpha^n}}(0)\}
\end{equation*}
is a basis for $\C^n$.
Now we choose $\underline{\alpha}=(0,\alpha^1,\dots,\alpha^n)$ and calculate according to 
\eqref{equ:basisfunctions} the multiplier $D(\underline{\alpha})=D(\underline{\alpha}, 1)$
(note that $d=1$):
\begin{align*}
D(\underline{\alpha})&=\det \begin{pmatrix}
1 & 0 & \dots &0\\
A^{\alpha^1}_\theta & A^{\alpha^1}_1 & \dots & A^{\alpha^1}_n\\
\vdots & \vdots & \ddots & \vdots\\
A^{\alpha^n}_\theta & A^{\alpha^n}_1 &\dots & A^{\alpha^n}_n
\end{pmatrix}\\
&=s^{n\cdot m}\det\begin{pmatrix}
1 & 0 & \dots &0\\
A^{\alpha^1}_\theta & B^{\alpha^1}_1 & \dots & B^{\alpha^1}_n\\
\vdots & \vdots & \ddots & \vdots\\
A^{\alpha^n}_\theta & B^{\alpha^n}_1 &\dots & B^{\alpha^n}_n
\end{pmatrix}\\
&=s^{n\cdot m}Q(\underline{\alpha})\\
\intertext{where}
Q(\underline{\alpha})&=\det\begin{pmatrix}
1 & 0 & \dots &0\\
A^{\alpha^1}_\theta & B^{\alpha^1}_1 & \dots & B^{\alpha^1}_n\\
\vdots & \vdots & \ddots & \vdots\\
A^{\alpha^n}_\theta & B^{\alpha^n}_1 &\dots & B^{\alpha^n}_n 
\end{pmatrix}
=\det\begin{pmatrix}
 B^{\alpha^1}_1 & \dots & B^{\alpha^1}_n\\
\vdots & \ddots & \vdots\\
B^{\alpha^n}_1 &\dots & B^{\alpha^n}_n
\end{pmatrix},\\
\intertext{hence}
Q(\underline{\alpha})(0)&=(2i)^n\det \begin{pmatrix}
\varphi_{z\bar{z}^{\alpha^1}}(0)\\
\vdots\\
\varphi_{z\bar{z}^{\alpha^n}}(0)
\end{pmatrix}
\neq 0.
\end{align*}
The proof of \autoref{cor:main2a} is complete.
\section{An Example}\label{sec:example}
In this section we are going to present an example to show that the local integrability condition in 
\autoref{thm:main} and \autoref{cor:main2a}, respectively, is essential 
for the conclusions in these statements to hold.
More precisely, we construct two different infinitesimal diffeomorphisms with distributional coefficents 
 on a real hypersurface in $\C^2$ such that the two diffeomorphisms are not locally integrable.
 We also construct a multiplier such that the products of this multiplier 
 with each diffeomorphism coincide and are smooth.
We further note that the coefficients of both diffeomorphisms are closely related 
to the non-extendable CR distribution for nonminimal CR submanifolds
given by Baouendi and Rothschild \cite{Baouendi-Rothschild}.

We begin with the calculation of the multiplier in a more general setting 
in order to simplify the computation. We will later on restrict ourselves to real hypersurfaces in $\C^2$.
Let $(M,\crb)$ be a $3$-dimensional abstract CR structure of hypersurface type that is generated 
in some coordinates by the vector field
\begin{equation*}
L=\frac{\partial}{\partial \bar{z}}+s^mb(z,\bar{z})\frac{\partial}{\partial s}.
\end{equation*}
The characteristic bundle $T^0M$ is spanned by
\begin{equation*}
\theta= -ds+s^{m}\bar{b}(z,\bar{z})dz+s^{m}b(z,\bar{z})d\bar{z}
\end{equation*}
and thus the forms $\omega =dz$ and $\theta$ form a basis of $T'M$.
We obtain 
\begin{equation*}
d\theta (L,\,.\,)=-2is^m \imag\Bigl(\frac{\partial b}{\partial z}\Bigr)(z,\bar{z})\omega +ms^{m-1}b(z,\bar{z})\theta.
\end{equation*}


We calculate the simplest nontrivial multiplier: for $\alpha^1=0,\ \alpha^2=1$ and $r=(1,1)$
(note that $N=2$ and $d=1$) we have
\begin{equation*}
\begin{split}
D(\underline{\alpha},r)&=\det
\begin{pmatrix}
1 & 0\\
 ms^{m-1}b(z,\bar{z}) &  -2is^m \imag \Bigl(\frac{\partial b}{\partial z}\Bigr)(z,\bar{z}) 
\end{pmatrix}
\\
&=-2is^m \imag\Bigl(\frac{\partial b}{\partial z}\Bigr)(z,\bar{z}).
\end{split}
\end{equation*}

Now let $m=1$, 
$b=-i\frac{\psi_{\bar{z}}}{1+i\psi}$ 
for some smooth real-valued function $\psi$ defined in an open neighbourhood $V$ of $0\in\C$, i.e. $M$ is an embedded real hypersurface in 
$\C^2$ given near the origin by the defining function
\begin{equation*}
\rho(z,\bar{z},w,\bar{w})=\imag w-\real w \cdot\psi(z,\bar{z}).
\end{equation*}
Then the multiplier $D(\underline{\alpha},r)$ from above is of the form
\begin{equation*}
D(\underline{\alpha},r)=
2is\biggl(\frac{\psi_{z\bar{z}}}{\lvert\Psi\rvert^2}-2\frac{\psi_z\psi_{\bar{z}}\psi}{\lvert \Psi\rvert^4}\biggr)=2is\, G(z,\bar{z}),
\end{equation*}
where we have set $\Psi:=1+i\psi$. Note also that $\omega_1=\omega =dz$ and $\omega_2=dw=\Psi ds+is\psi_zdz+is\psi_{\bar{z}}d\bar{z}$ is an alternative basis for $T^\prime M$ in this case.

Since $M$ is an real hypersurface in $\C^2$ we have the following decomposition of an open neighbourhood $\Omega$ of $0\in\C^2$
\begin{equation*}
\Omega =U_+\cup M\cup U_-
\end{equation*}
with $U_+=\{(z,w)\in\Omega \colon \rho(z,\bar{z},\bar{z},w,\bar{w}) >0\}$ and 
$U_-=\{(z,w)\in\Omega \colon \rho(z,\bar{z},w,\bar{w})<0\}$ being open subsets of $\Omega$.
We shall also assume that $\Omega\cap(\C\times\{0\})=V\times\{0\}$.

If we consider the holomorphic function
\begin{equation*}
F:\; (z,w)\longmapsto \frac{1}{w}
\end{equation*}
on $\C\times\C\!\setminus\!\{0\}$ then we see that $F$ is of slow growth for $w\rightarrow 0$ 
on both $U_+$ and $U_-$.
We write $u_+=b_+F$ for the boundary value of $F_{|U_+}$ and $u_-=b_-F$ 
for the boundary value of $F_{|U_-}$, respectively. Note that by the Plemelj-Sokhotski jump relations 
(see, e.g., \cite{Duistermaat}) we have
\begin{equation*}
u_0=u_+-u_-=-\frac{2\pi i}{\Psi}(1\otimes \delta).
\end{equation*}
Note also that $u_0$ is essentially (up to the factor $-2\pi i$) the non-extendable CR distribution from 
\cite{Baouendi-Rothschild}, c.f.\ also \cite{Baouendi:1999uy}, for the hypersurface $M$. 

We claim that $\WF u_+ =\R_{+}\theta\vert_{V\times\{0\}}$ and 
$\WF u_-=\R_-\theta\vert_{V\times\{0\}}$, respectively: 
Note that $u_+$ and $u_-$ are smooth outside $V\!\times\!\{0\}\subset M$ and that 
$\WF u_0=(\R\!\setminus\!\{0\})\,\theta\vert_{V\times\{0\}}$.
Furthermore we know that $\WF u_+$ and $\WF u_-$ must each be contained in 
$(\R\!\setminus\!\{0\})\theta$ since both are CR distributions.
However, since $u_+$ extends holomorphically to $U_+$ it follows that 
$\WF u_+\cap \R_-\theta =\emptyset$ (see e.g.\ \cite{Lamel:2004hh}) and 
by symmetry we have also $\WF u_-\cap \R_+\theta =\emptyset$.
Now let $p=(z,0)\in V\!\times\!\{0\}$ and suppose that, e.g., $\R_+\theta_p\cap\WF u_+=\emptyset$.
Then we would have that $\R_+\theta_p\cap\WF u_0=\emptyset$ which is obviously a contradiction to above.

We consider the following vector fields with distributional coefficients
\begin{align*}
X_+&= u_+\frac{\partial}{\partial z}{\Big|}_M +\bar{u}_+\frac{\partial}{\partial \bar{z}}{\Big|}_M\\
\intertext{and}
X_-&=u_-\frac{\partial}{\partial z}{\Big|}_M+\bar{u}_-\frac{\partial}{\partial \bar{z}}{\Big|}_M.
\end{align*}
We claim that both vector fields constitute infinitesimal CR diffeomorphisms on $M$ if 
\begin{equation*}
\frac{\partial \psi}{\partial x}=\psi\frac{\partial \psi}{\partial y}
\end{equation*} 
where  $z=x+iy$.
We show this for $X_+$, the argument for $X_-$ is completely analagous of course.
First we see that $X_+$ is real since
\begin{equation*}
X_+=\real u_+\frac{\partial}{\partial x}{\Big|}_M+\imag u_+\frac{\partial}{\partial y}{\Big|}_M.
\end{equation*}
 Furthermore note that the regular distributions ($\nu >0$)
\begin{align*}
u_\nu=\frac{1}{s\Psi +i\nu}
\end{align*}
on $M$ converge to $u_+$ in $\D^\prime$ for $\nu\rightarrow 0$.
We have 
\begin{equation*}
\begin{split}
X_+\rho &=-s\psi_x\real u_+ -s\psi_y\imag u_+\\
&=\lim_{\nu\rightarrow 0}\bigl(-s\psi_x\real u_\nu-s\psi_y\imag u_\nu\bigr)\\
&=\lim_{\nu\rightarrow 0}\biggl(\frac{-s^2(\psi_x-\psi\psi_y) +s\nu}{s^2+(s\psi+\nu)^2}\biggr)\\
&=\lim_{\nu\rightarrow 0}s\nu \lvert u_\nu\rvert^2=0
\end{split}
\end{equation*}
with convergence in $\D^\prime$.
Hence $X_+\in\D^\prime(M, TM)$. We conclude further
\begin{align*}
L\bigl(\omega_1 (X_+)\bigr)&=Lu_+=0, \\
L\bigl(\omega_2(X_+)\bigr)&=0\\
\intertext{and since $d\omega_j=0,$ $(j=1,2)$}
d\omega_1 (L,X_+)&=0,\\
d\omega_2 (L,X_+)&=0.
\end{align*}

Since $\omega_1(X_+)=\omega_1(X_-)=u_+$, $\omega_2(X_+)=\omega_2(X_+)=0$ and 
 $\omega_1(X_-)=u_-$ all the assumptions of \autoref{thm:multipliers} are satisfied for 
 both $X_+$ and $X_-$.
 
Indeed
\begin{equation*}
D(\underline{\alpha},r)u_+=D(\underline{\alpha},r)u_-=2i\frac{G(z,\bar{z})}{\Psi(z,\bar{z})}\in\E(M)
\end{equation*}
hence $D(\underline{\alpha},r)X_+=D(\underline{\alpha},r)X_-\in\E$.
Note also that $D(\underline{\alpha},r)u_0=0$.

\bibliographystyle{plain}
\bibliography{fl15}
\end{document}